\definecolor{webgreen}{rgb}{0,.5,0}
\definecolor{webbrown}{rgb}{.6,0,0}
\newcommand{\seqnum}[1]{\href{http://oeis.org/#1}{\color{ProcessBlue}{\underline{#1}}}}
\newcommand{\citep}{\cite}
\newcommand{\cf}[0]{cf.\ } 
\renewcommand{\emph}[1]{\textit{#1}} 
\theoremstyle{plain}
\newtheorem{theorem}{Theorem}[section]
\newtheorem{lemma}[theorem]{Lemma}
\newtheorem{cor}[theorem]{Corollary}
\newtheorem{prop}[theorem]{Proposition}
\theoremstyle{definition}
\newtheorem{definition}[theorem]{Definition}
\newtheorem{example}[theorem]{Example}
\newtheorem{remark}[theorem]{Remark}
\newcommand{\Iverson}[1]{\ensuremath{\left[#1\right]_{\delta}}} 
\setlist[itemize]{leftmargin=0.65in}
\newcommand{\Hn}[2]{
     \ifthenelse{\equal{#2}{1}}{H_{#1}}{H_{#1}^{\left(#2\right)}}
}
\newcommand{\floor}[1]{\ensuremath{\left\lfloor #1 \right\rfloor}} 
\DeclareMathOperator{\od}{od} 
\DeclareMathOperator{\PP}{PP} 
\DeclareMathOperator{\PL}{PL} 
\begin{document}

\begin{center}
\vskip 1cm{\LARGE\bf 
       Exact Formulas for the Generalized \\ 
       \vskip 0.1in
       Sum-of-Divisors Functions} 
\vskip 1cm
\large
Maxie D. Schmidt \\
School of Mathematics \\ 
Georgia Institute of Technology \\ 
Atlanta, GA 30332 \\
USA \\ 
\href{mailto:maxieds@gmail.com}{\tt maxieds@gmail.com} \\ 
\href{mailto:mschmidt34@gatech.edu}{\tt mschmidt34@gatech.edu} \\
\end{center}

\vskip .2 in

\begin{abstract}
We prove new exact formulas for the generalized sum-of-divisors functions, 
$\sigma_{\alpha}(x) := \sum_{d|x} d^{\alpha}$. 
The formulas for $\sigma_{\alpha}(x)$ when $\alpha \in \mathbb{C}$ is fixed and 
$x \geq 1$ involves a finite sum over all of the prime factors $n \leq x$ and 
terms involving the $r$-order harmonic number sequences and the Ramanujan sums $c_d(x)$. 
The generalized harmonic number sequences correspond to the 
partial sums of the Riemann zeta function when $r > 1$ and are related to the 
generalized Bernoulli numbers when $r \leq 0$ is integer-valued. 

A key part of our new expansions of the Lambert series generating functions for the 
generalized divisor functions is formed by taking 
logarithmic derivatives of the 
cyclotomic polynomials, $\Phi_n(q)$, which completely factorize the 
Lambert series terms $(1-q^n)^{-1}$ into irreducible polynomials in $q$. 
We focus on the computational aspects of these exact expressions, including their 
interplay with experimental mathematics, and comparisons of the new formulas for 
$\sigma_{\alpha}(n)$ and the summatory functions $\sum_{n \leq x} \sigma_{\alpha}(n)$.
\end{abstract} 

\noindent 2010 {\it Mathematics Subject Classification}: Primary 30B50;
Secondary 11N64, 11B83.

\noindent \emph{Keywords:}
Divisor function; sum-of-divisors function; Lambert series; 
cyclotomic polynomial.

\noindent 
\textit{Revised: } \today

\section{Introduction} 

\subsection{Lambert series generating functions} 

We begin our search for interesting formulas for the 
\emph{generalized sum-of-divisors} functions, 
$\sigma_{\alpha}(n)$ for $\alpha \in \mathbb{C}$, by 
expanding the partial sums of the Lambert series which generate these functions 
in the form of \citep[\S 17.10]{HARDYWRIGHTNUMT} \citep[\S 27.7]{NISTHB} 
\begin{align} 
\label{eqn_LambertSeriesOGF_def} 
\widetilde{L}_{\alpha}(q) & := \sum_{n \geq 1} \frac{n^{\alpha} q^n}{1-q^n} = 
     \sum_{m \geq 1} \sigma_{\alpha}(m) q^m,\ |q| < 1. 
\end{align} 
In particular, we arrive at new expansions of the 
partial sums of Lambert series generating functions in 
\eqref{eqn_LambertSeriesOGF_def} which generate our special arithmetic sequences as 
\begin{align} 
\label{eqn_GenSODFn_DivisorSumDef_v1} 
\sigma_{\alpha}(x) & = [q^x]\left(\sum_{n=1}^x \frac{n^{\alpha} q^n}{1-q^n} 
     \right) = 
     \sum_{d|x} d^{\alpha},\ \alpha \in \mathbb{Z}^{+}.
\end{align} 

\subsection{Factoring partial sums into irreducibles} 

The technique employed in this article using the Lambert series expansions 
in \eqref{eqn_LambertSeriesOGF_def} is to expand by repeated 
use of the properties of the well-known sequence of 
\emph{cyclotomic polynomials}, $\Phi_n(q)$, 
defined by \citep[\S 3]{DAVENPORT-MULTNUMT} \cite[\S 13.2]{CYCLOTOMIC-POLYS} 
\begin{align} 
\label{eqn_CyclotomicPoly_ProductDef} 
\Phi_n(q) & := \prod_{\substack{1 \leq k \leq n \\ \gcd(k, n) = 1}} \left( 
     q - e^{2\pi\imath \frac{k}{n}}\right). 
\end{align} 
For each integer $n \geq 1$ we have the factorizations 
\begin{align} 
\label{eqn_CyclotomicFactorization_stmt_v1} 
q^n-1 & = \prod_{d|n} \Phi_d(q), 
\end{align} 
or equivalently that 
\begin{align} 
\label{eqn_KeyCyclotomicPolyProdIdent_ForLLambdaFn} 
\Phi_n(q) & = \prod_{d|n} (q^d-1)^{\mu(n/d)}, 
\end{align} 
where $\mu(n)$ denotes the \emph{M\"obius function}. 
If $n = p^m r$ with $p$ prime and $\gcd(p, r) = 1$, we have the identity that 
$\Phi_n(q) = \Phi_{pr}(q^{p^{m-1}})$. 
In later results stated in the article, 
we use the known expansions of the cyclotomic polynomials 
which reduce the order $n$ of the polynomials by exponentiation of the 
indeterminate $q$ when $n$ contains a factor of a prime power. 
A short list summarizing these transformations is given as follows 
for $p$ and odd prime, $k \geq 1$, and where $p \nmid r$:
\begin{equation} 
\label{eqn_CyclotomicPolysPrimeProperties} 
\Phi_{2p}(q) = \Phi_p(-q), \Phi_{p^k}(q) = \Phi_p\left(q^{p^{k-1}}\right), 
\Phi_{p^k r}(q) = \Phi_{pr}\left(q^{p^{k-1}}\right), 
\Phi_{2^k}(q) = q^{2^{k-1}}+1, 
\end{equation} 
The next definitions to expand our Lambert series generating 
functions further by factoring its terms by the cyclotomic polynomials\footnote{ 
     \textit{Special notation}: 
     \emph{Iverson's convention} compactly specifies 
     boolean-valued conditions and is equivalent to the 
     \emph{Kronecker delta function}, $\delta_{i,j}$, as 
     $\Iverson{n = k} \equiv \delta_{n,k}$. 
     Similarly, $\Iverson{\mathtt{cond = True}} \equiv 
     \delta_{\mathtt{cond}, \mathtt{True}} \in \{0, 1\}$, 
     which is $1$ if and only if \texttt{cond} is true, 
     in the remainder of the article. 
}. 

\begin{definition}[Notation and logatithmic derivatives] 
\label{def_NotationProductsLogDer} 
For $n \geq 2$ and indeterminate $q$, we define the following 
rational functions related to the logarithmic derivatives of the cyclotomic 
polynomials: 
\begin{align} 
\label{eqn_Pin_Cyclotomic2_defs}
\Pi_n(q) & := \sum_{j=0}^{n-2} \frac{(n-1-j) q^j(1-q)}{(1-q^n)} = 
     \frac{(n-1) - n q - q^n}{(1-q)(1-q^n)} \\ 
\notag 
\widetilde{\Phi}_n(q) & := \frac{1}{q} \cdot \frac{d}{dw}\left[\log \Phi_n(w) \right] 
     \Bigr|_{w \rightarrow \frac{1}{q}}. 
\end{align} 
For any natural number $n \geq 2$ and prime $p$, we use 
$\nu_p(n)$ to denote the largest power of $p$ dividing $n$. 
If $p \nmid n$, then $\nu_p(n) = 0$ and if 
$n = p_1^{\gamma_1} p_2^{\gamma_2} \cdots p_k^{\gamma_k}$ is the prime 
factorization of $n$ then $\nu_{p_i}(n) = \gamma_i$. 
That is, $\nu_p(n)$ is the valuation function indicating the exact non-negative 
exponent of the prime $p$ dividing any $n \geq 2$. 
In the notation that follows, we consider sums indexed by $p$ to be summed over 
only the primes $p$ by convention unless specified otherwise. 
Finally, we define the function $\widetilde{\chi}_{\PP}(n)$ 
to denote the indicator function 
of the positive natural numbers $n$ which are not of the form 
$n = p^k, 2p^k$ for any primes $p$ and exponents $k \geq 1$. 
The conventions which make this definition accessible will become clear in the 
next subsections.
\end{definition} 

\begin{table}[ht!] 

\newcommand{\C}[2]{\widetilde{\Phi}_{#1}\left(#2\right)} 
\renewcommand{\arraystretch}{1.5} 

\begin{equation*} 
\resizebox{\linewidth}{!}{
$\begin{array}{|c|l|l|l|} \hline 
 n & \parbox{5cm}{\small\bf{Lambert Series Expansions}} 
 \left(\frac{n q^n}{1-q^n} + n - \frac{1}{1-q}\right) & 
 \parbox{3cm}{\small\bf Formula \\ Expansions} & 
 \parbox{3cm}{\small\bf Reduced-Index \\ Formula} \\ \hline 
 2 & \frac{1}{1+q} & \scriptstyle \C{2}{q} & \scriptstyle \text{- -} \\
 3 & \frac{2+q}{1+q+q^2} & \scriptstyle \C{3}{q} & \scriptstyle \text{- -} \\
 4 & \frac{1}{1+q}+\frac{2}{1+q^2} & \scriptstyle \C{2}{q} + \C{4}{q} & \scriptstyle \C{2}{q} + 2\C{2}{q^2} \\
 5 & \frac{4+3 q+2 q^2+q^3}{1+q+q^2+q^3+q^4} & \scriptstyle \C{5}{q} & \scriptstyle \text{- -} \\
 6 & \frac{1}{1+q}+\frac{2-q}{1-q+q^2}+\frac{2+q}{1+q+q^2} & 
     \scriptstyle \C{2}{q} + \C{3}{q} + \C{6}{q} & \scriptstyle \text{- -} \\
 7 & \frac{6+5 q+4 q^2+3 q^3+2 q^4+q^5}{1+q+q^2+q^3+q^4+q^5+q^6} & 
     \scriptstyle \C{7}{q} & \scriptstyle \text{- -} \\
 8 & \frac{1}{1+q}+\frac{2}{1+q^2}+\frac{4}{1+q^4} & 
     \scriptstyle \C{2}{q} + \C{4}{q} + \C{8}{q} & 
     \scriptstyle \C{2}{q} + 2\C{2}{q^2} + 4\C{2}{q^4} \\
 9 & \frac{2+q}{1+q+q^2}+\frac{3 \left(2+q^3\right)}{1+q^3+q^6} & 
     \scriptstyle \C{3}{q} + \C{9}{q} & 
     \scriptstyle \C{3}{q} + 3\C{3}{q^2} \\
 10 & \frac{1}{1+q}+\frac{4-3 q+2 q^2-q^3}{1-q+q^2-q^3+q^4}+\frac{4+3 q+2 q^2+q^3}{1+q+q^2+q^3+q^4} & 
      \scriptstyle \C{2}{q} + \C{5}{q} + \C{10}{q} & 
      \scriptstyle \text{- -} \\
 11 & \frac{10+9 q+8 q^2+7 q^3+6 q^4+5 q^5+4 q^6+3 q^7+2 q^8+q^9}{1+q+q^2+q^3+q^4+q^5+q^6+q^7+q^8+q^9+q^{10}} & 
      \scriptstyle \C{11}{q} & \scriptstyle \text{- -} \\
 12 & \frac{1}{1+q}+\frac{2}{1+q^2}+\frac{2-q}{1-q+q^2}+\frac{2+q}{1+q+q^2}-\frac{2 \left(-2+q^2\right)}{1-q^2+q^4} & 
      \scriptstyle \C{2}{q} + \C{3}{q} + \C{4}{q} & 
     \scriptstyle \C{2}{q} + 2\C{2}{q^2} + \C{3}{q} \\ 
    & & \scriptstyle \phantom{\C{2}{q}} + \C{6}{q} + \C{12}{q} & 
        \scriptstyle \phantom{\C{2}{q}} + \C{6}{q} + 2\C{6}{q} \\
 13 & \frac{12+11 q+10 q^2+9 q^3+8 q^4+7 q^5+6 q^6+5 q^7+4 q^8+3 q^9+2 q^{10}+q^{11}}{1+q+q^2+q^3+q^4+q^5+q^6+q^7+q^8+q^9+q^{10}+q^{11}+q^{12}} & 
      \scriptstyle \C{13}{q} & \scriptstyle \text{- -} \\
 14 & \frac{1}{1+q}+\frac{6-5 q+4 q^2-3 q^3+2 q^4-q^5}{1-q+q^2-q^3+q^4-q^5+q^6}+\frac{6+5 q+4 q^2+3 q^3+2 q^4+q^5}{1+q+q^2+q^3+q^4+q^5+q^6} & 
      \scriptstyle \C{2}{q} + \C{7}{q} + \C{14}{q} & 
      \scriptstyle \text{- -} \\
 15 & \frac{2+q}{1+q+q^2}+\frac{4+3 q+2 q^2+q^3}{1+q+q^2+q^3+q^4}+\frac{8-7 q+5 q^3-4 q^4+3 q^5-q^7}{1-q+q^3-q^4+q^5-q^7+q^8} & 
      \scriptstyle \C{3}{q} + \C{5}{q} + \C{15}{q} & 
      \scriptstyle \text{- -} \\
 16 & \frac{1}{1+q}+\frac{2}{1+q^2}+\frac{4}{1+q^4}+\frac{8}{1+q^8} & 
      \scriptstyle \C{2}{q} + \C{4}{q} + \C{8}{q} + \C{16}{q} & 
      \scriptstyle \C{2}{q} + 2\C{2}{q^2} + 4\C{2}{q^4} + 8\C{2}{q^8} \\
     \hline 
\end{array}$
}
\end{equation*} 

\bigskip

\caption{\textbf{Expansions of Lambert Series Terms by Cyclotomic Polynomial Primitives.} 
         The double dashes (-{}-) in the rightmost column of the table indicate that the 
          entry is the same as the previous column to distinguish between the cases where we 
          apply our special reduction formulas.}
\label{table_qnOver1mqn_LambertSeriesTerm_exps} 
\vspace*{-\baselineskip} 

\end{table} 

\subsection{Factored Lambert series expansions} 

To provide some intuition to the factorizations of the terms in our 
Lambert series generating functions defined above, the listings in 
Table \ref{table_qnOver1mqn_LambertSeriesTerm_exps} provide the first 
several expansions of the right-hand-sides of the next equations 
according to the optimal applications of 
\eqref{eqn_CyclotomicPolysPrimeProperties} in our new formulas. 
The components highlighted by the examples in the table 
form the key terms of our new exact formula expansions. 
Notably, we see that we may write the expansions of the individual 
Lambert series terms as
\begin{align*} 
\frac{n q^n}{1-q^n} + n - \frac{1}{1-q} & = \sum_{\substack{d|n \\ d > 1}} 
     \widetilde{\Phi}_d(q), 
\end{align*} 
where we can reduce the index orders of the cyclotomic polynomials, $\Phi_n(q)$, and 
their logarithmic derivatives, $\widetilde{\Phi}_d(q)$, in lower-indexed 
cyclotomic polynomials with $q$ transformed into powers of $q$ to powers of primes 
according to the identities noted in \eqref{eqn_CyclotomicPolysPrimeProperties} 
\citep[\cf \S 3]{DAVENPORT-MULTNUMT} \cite[\cf \S 13.2]{CYCLOTOMIC-POLYS}. 
An appeal to the logarithmic derivative of a product of differentiable rational 
functions and the definition given in \eqref{eqn_Pin_Cyclotomic2_defs} of the 
last definition, allows us to prove that for each natural number 
$n \geq 2$ we have that
\begin{align} 
\label{eqn_Intro_LambertSeriesTerm_exps_v1} 
\frac{q^n}{1-q^n} & = -1 + \frac{1}{n(1-q)} + \frac{1}{n} \sum_{\substack{d|n \\ d > 1}} 
     \widetilde{\Phi}_d(q). 
\end{align} 

\begin{remark}[Experimental Intuition for These Formulas]
We begin by pointing our that the genesis of the formulas proved in 
Section \ref{Section_Proofs} (stated precisely below) came about by 
experimentally observing the exact polynomial expansions of the key 
Lambert series terms $\frac{q^n}{1-q^n}$ which provide generating functions for 
$\sigma_{\alpha}(n)$ with \emph{Mathematica}. Namely, the computer algebra routines 
employed by default in \emph{Mathematica} are able to produce the semi-factored output 
reproduced in Table \ref{table_qnOver1mqn_LambertSeriesTerm_exps}. 
Without this computationally driven means for motivating experimental mathematics, we 
would most likely never have noticed these subtle formulas for the often-studied 
class of sum-of-divisors functions! 

The motivation for our definition of the next three divisor sum variants 
given in Definition \ref{def_Notation_for_CompDivSums_Sinq} is to effectively 
exploit the particularly desirable properties of the coefficients of these polynomial expansions 
when they are treated as generating functions for the generalized sum-of-divisors functions in the 
main results stated in the next section. More to the point, when a natural number $d$ is of the form 
$d = p^k, 2p^k$ for some prime $p$ and exponent $k \in \mathbb{Z}^{+}$, we have the reduction formulas 
cited in \eqref{eqn_CyclotomicPolysPrimeProperties} above to translate the implicit forms of the 
cyclotomic polynomials $\Phi_d(q)$ into 
polynomials in now powers of $q^{p^k}$ indexed only by the sums over primes $p$. 

The third and fourth columns of Table \ref{table_qnOver1mqn_LambertSeriesTerm_exps} 
naturally suggest by computation the 
exact forms of the (logarithmic derivative) polynomial expansions we are looking for 
to expand our Lambert series terms. 
In effect, the observation of these trends in the polynomial expansions of $1-q^n$ 
led to the intuition motivating our new results within this article. 
In particular, we introduce the notation in the next definition corresponding to 
component sums employed to express sums over the previous identity in our key 
results stated in the next pages of the article. 
\end{remark}

\begin{definition}[Notation for component divisor sums] 
\label{def_Notation_for_CompDivSums_Sinq} 
For fixed $q$ and any $n \geq 1$, we define the component sums, $\widetilde{S}_{i,n}(q)$ for 
$i = 0,1,2$ as follows: 
\begin{align*} 
\widetilde{S}_{0,n}(q) & = 
     \sum_{\substack{d|n \\ d > 1 \\ d \neq p^k, 2p^k}} \widetilde{\Phi}_d(q) \\ 
\widetilde{S}_{1,n}(q) & = \sum_{p|n} \Pi_{p^{\nu_p(n)}}(q) \\ 
\widetilde{S}_{2,n}(q) & =  \sum_{\substack{2p|n \\ p > 2}}\Pi_{p^{\nu_p(n)}}(-q). 
\end{align*} 
\end{definition} 

\subsection{Statements of key results and characterizations} 

Recall that for any $\alpha \in \mathbb{C}$ the generalized sum-of-divisors function 
is defined by the divisor sum 
\[
\sigma_{\alpha}(x) = \sum_{d|x} d^{\alpha}. 
\]
We use the following notation for the generalized $\alpha$-order harmonic number 
sequences: 
\[
H_n^{(\alpha)} := \sum_{k=1}^n k^{-\alpha}. 
\]

\begin{prop}[Series coefficients of the component sums] 
\label{prop_ComponentSum_SeriesCoeffs} 
For any fixed $\alpha \in \mathbb{C}$ and integers $x \geq 1$, we have the 
following components of the partial sums of the Lambert series generating 
cited next in Theorem \ref{theorem_ExactFormulas_v2}: 
\begin{align*} 
\tag{i} 
\widehat{S}_0^{(\alpha)}(x) & := 
     [q^x] \sum_{n=1}^x \widetilde{S}_{0,n}(q) n^{\alpha-1} =: 
     \tau_{\alpha}(x) \\ 
\tag{ii} 
\widehat{S}_1^{(\alpha)}(x) & := 
[q^x] \sum_{n=1}^x \widetilde{S}_{1,n}(q) n^{\alpha-1} = 
     \sum_{p \leq x} \sum_{k=1}^{\nu_p(x)+1} 
     p^{\alpha k-1} H_{\left\lfloor \frac{x}{p^k} \right\rfloor}^{(1-\alpha)} 
     \left( 
     p\left\lfloor \frac{x}{p^k} \right\rfloor -p \left\lfloor 
     \frac{x}{p^k} - \frac{1}{p} \right\rfloor - 1 
     \right) \\ 
\notag
\widehat{S}_2^{(\alpha)}(x) & := 
[q^x] \sum_{n=1}^x \widetilde{S}_{2,n}(q) n^{\alpha-1} \\ 
\tag{iii} 
     & \phantom{:} = 
     \sum_{3 \leq p \leq x} \sum_{k=1}^{\nu_p(x)+1} 
     \frac{p^{\alpha k-1}}{2^{1-\alpha}} 
     H_{\left\lfloor \frac{x}{2 p^k} \right\rfloor}^{(1-\alpha)} 
     (-1)^{\left\lfloor \frac{x}{p^{k-1}} \right\rfloor} 
     \left( 
     p\left\lfloor \frac{x}{p^k} \right\rfloor -p \left\lfloor 
     \frac{x}{p^k} - \frac{1}{p} \right\rfloor - 1 
     \right). 
\end{align*} 
\end{prop}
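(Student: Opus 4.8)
The plan is to extract the coefficient $[q^x]$ from each of the three component sums $\widetilde S_{i,n}(q)$ separately, after first writing $\widetilde\Phi_d(q)$ and $\Pi_{p^{\nu_p(n)}}(\pm q)$ as honest power series in $q$. Part (i) is essentially a definition: expanding $\widetilde\Phi_d(q)$ using \eqref{eqn_Intro_LambertSeriesTerm_exps_v1} together with Definition \ref{def_Notation_for_CompDivSums_Sinq}, the full sum $\sum_{n=1}^{x}\bigl(\widetilde S_{0,n}(q)+\widetilde S_{1,n}(q)+\widetilde S_{2,n}(q)\bigr)n^{\alpha-1}$ reproduces the Lambert series \eqref{eqn_LambertSeriesOGF_def} truncated at $n=x$ up to the correction terms $-1+\tfrac{1}{n(1-q)}$; so $\widehat S_0^{(\alpha)}(x)$ is defined to be whatever is left after removing the $p^k,2p^k$ pieces, and we simply \emph{name} it $\tau_\alpha(x)$. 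The real content is in (ii) and (iii).

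For part (ii), I would start from the closed form in \eqref{eqn_Pin_Cyclotomic2_defs}, namely $\Pi_n(q)=\sum_{j=0}^{n-2}(n-1-j)\,q^j\,\frac{1-q}{1-q^n}$. Using $\frac{1}{1-q^n}=\sum_{m\ge 0}q^{nm}$, one gets an explicit $q$-expansion $\Pi_n(q)=\sum_{\ell\ge 0}a_{n}(\ell)q^\ell$ whose coefficients $a_n(\ell)$ depend only on $\ell\bmod n$ and have the piecewise-linear shape visible in Table~\ref{table_qnOver1mqn_LambertSeriesTerm_exps}; in fact a short computation gives $a_n(\ell)=(n-1-r)-nr'+\text{(boundary)}$ where $\ell=nr'+r$, $0\le r<n$, which one repackages using the floor-function identity $p\lfloor x/p^k\rfloor-p\lfloor x/p^k-1/p\rfloor-1$ appearing in the statement. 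Substituting $n=p^{\nu_p(x)}$ (note $\nu_p(n)=\nu_p(x)$ is forced once we ask $[q^x]$ and $p^{\nu_p(n)}\mid n$, $n\le x$), summing the resulting geometric-type contributions over all $n\le x$ with $p^{\nu_p(n)}\mid n$, and collecting the $p^{\alpha k-1}$ weights coming from $n^{\alpha-1}$ as $n$ ranges over multiples of $p^{k}$ not divisible by $p^{k+1}$, produces exactly the inner sum $\sum_{k=1}^{\nu_p(x)+1}p^{\alpha k-1}H^{(1-\alpha)}_{\lfloor x/p^k\rfloor}(\cdots)$: the harmonic number $H^{(1-\alpha)}_{\lfloor x/p^k\rfloor}$ arises as $\sum_{j}j^{\alpha-1}$ over the admissible multiples $j\cdot p^k\le x$. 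Part (iii) is the same computation with $q\mapsto -q$, which introduces the factor $(-1)^{\lfloor x/p^{k-1}\rfloor}$ (the exponent tracking the power of $q$ landing in the $2p^k$-indexed term) and the $2^{\alpha-1}$ normalization from isolating the even multiples $2p^k\mid n$; one also restricts to odd primes $p\ge 3$, matching $\widetilde S_{2,n}$.

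The main obstacle is bookkeeping the index gymnastics correctly: for a fixed prime $p$ and fixed $x$, the pair $(n,k)$ contributing to $[q^x]$ must satisfy $k=\nu_p(n)\le \nu_p(x)+?$, $p^k\mid n$, $n\le x$, and simultaneously the $q$-exponent congruence coming from $\Pi_{p^k}$ must resolve to $x$; keeping straight which of these constraints becomes the range $1\le k\le \nu_p(x)+1$, which becomes the argument $\lfloor x/p^k\rfloor$ of the harmonic number, and which becomes the floor-difference factor, is where sign and off-by-one errors will creep in. I would therefore carry out (ii) in full detail with a clean change of variables $n=p^k\cdot m$, $p\nmid m$, and verify the output against the $n=8,9,12,16$ rows of Table~\ref{table_qnOver1mqn_LambertSeriesTerm_exps} before transcribing the $q\mapsto -q$ version for (iii).
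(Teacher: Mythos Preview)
Your overall strategy for (ii)---expand $\Pi_m(q)$ as a power series, extract $[q^x]$, then sum over $n$---is viable in principle, but it differs from the paper's route and contains a real mistake.

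\textbf{The error.} You write that ``$\nu_p(n)=\nu_p(x)$ is forced once we ask $[q^x]$''. This is false. A direct computation from \eqref{eqn_Pin_Cyclotomic2_defs} gives
\[
[q^\ell]\,\Pi_m(q)=
\begin{cases}
m-1,& m\mid \ell,\\
-1,& m\nmid \ell,
\end{cases}
\]
so $[q^x]\Pi_{p^{\nu_p(n)}}(q)$ is nonzero for \emph{every} $n$ with $p\mid n$, not just those with $\nu_p(n)=\nu_p(x)$. When you then say you will ``collect the $p^{\alpha k-1}$ weights \dots\ as $n$ ranges over multiples of $p^k$ not divisible by $p^{k+1}$'', you are implicitly grouping by $k=\nu_p(n)$ over all $k\ge 1$, which contradicts the earlier claim. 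Done correctly, your route requires summing $(p^k-1)$ over $n$ with $\nu_p(n)=k\le\nu_p(x)$ and $(-1)$ over $n$ with $\nu_p(n)=k>\nu_p(x)$, and then telescoping the resulting sums $p^{k(\alpha-1)}\bigl(H^{(1-\alpha)}_{\lfloor x/p^k\rfloor}-p^{\alpha-1}H^{(1-\alpha)}_{\lfloor x/p^{k+1}\rfloor}\bigr)$ back into the stated form. That telescoping is the missing step.

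\textbf{What the paper does instead.} Rather than compute $[q^x]\Pi_{p^{\nu_p(n)}}(q)$ directly, the paper first \emph{decomposes} $\Pi_{p^{\nu_p(n)}}(q)$ using the prime-power reduction $\Phi_{p^k}(q)=\Phi_p(q^{p^{k-1}})$ from \eqref{eqn_CyclotomicPolysPrimeProperties}, writing
\[
\widetilde S_{1,n}(q)=\sum_{p\mid n}\sum_{k=1}^{\nu_p(n)} p^{k-1}\,Q_{p,k}(q),
\qquad
Q_{p,k}(q)=\frac{\sum_{j=0}^{p-2}(p-1-j)q^{p^{k-1}j}}{\sum_{i=0}^{p-1}q^{p^{k-1}i}}.
\]
Now the index $k$ is already present, and for fixed $(p,k)$ the term $Q_{p,k}$ appears for every $n\le x$ with $p^k\mid n$; summing $n^{\alpha-1}$ over those $n$ gives $p^{k\alpha-1}H^{(1-\alpha)}_{\lfloor x/p^k\rfloor}$ immediately, with no telescoping. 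One is then left to extract $[q^x]$ from the single rational function $\frac{(p-1)-pq^{p^{k-1}}+q^{p^k}}{(1-q^{p^{k-1}})(1-q^{p^k})}$, which the paper does term by term (this is where the floor-difference factor $p\lfloor x/p^k\rfloor-p\lfloor x/p^k-1/p\rfloor-1$ comes from). Your proposal never invokes this cyclotomic decomposition, which is the paper's organizing device; it trades the telescoping you would need for a slightly more involved coefficient extraction at the end.
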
 

The precise form of the expansions in (i) of the previous proposition and its connections to the 
Ramanujan sums, $c_q(n)$, is explored in the results stated in 
Proposition \ref{remark_RamanujansSum} of the next section. 

\begin{theorem}[Exact formulas for the generalized sum-of-divisors functions] 
\label{theorem_ExactFormulas_v2} 
\label{theorem_main_SOD_result} 
For any fixed $\alpha \in \mathbb{C}$ and natural numbers $x \geq 1$, we have the 
following generating function formula: 
\begin{align*} 
\sigma_{\alpha}(x) & = H_x^{(1-\alpha)} + 
     \widehat{S}_0^{(\alpha)}(x) + \widehat{S}_1^{(\alpha)}(x) + \widehat{S}_2^{(\alpha)}(x). 
\end{align*} 
\end{theorem}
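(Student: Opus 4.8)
The plan is to extract the coefficient of $q^x$ from a finite rearrangement of the truncated Lambert series generating function. By \eqref{eqn_GenSODFn_DivisorSumDef_v1} --- which holds verbatim for complex $\alpha$, since $[q^x]\frac{n^\alpha q^n}{1-q^n} = n^\alpha\Iverson{n\mid x}$ and no summand with $n>x$ contributes to $[q^x]$ --- we have $\sigma_\alpha(x) = [q^x]\sum_{n=1}^x n^\alpha \frac{q^n}{1-q^n}$. First I would insert the expansion \eqref{eqn_Intro_LambertSeriesTerm_exps_v1} into each of these $x$ summands (the case $n=1$ being the trivial instance $\frac{q}{1-q}=-1+\frac{1}{1-q}$ with an empty divisor sum), multiply through by $n^\alpha$, and so rewrite $\sigma_\alpha(x)$ as $[q^x]$ of a finite sum of the rational functions $-n^\alpha$, $\frac{n^{\alpha-1}}{1-q}$, and $n^{\alpha-1}\sum_{d\mid n,\,d>1}\widetilde{\Phi}_d(q)$. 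Extracting $[q^x]$ term by term, the constants $-n^\alpha$ drop out; the pole terms contribute $[q^x]\frac{1}{1-q}\sum_{n=1}^x n^{\alpha-1} = \sum_{n=1}^x n^{\alpha-1} = H_x^{(1-\alpha)}$; and there remains $[q^x]\sum_{n=1}^x n^{\alpha-1}\sum_{d\mid n,\,d>1}\widetilde{\Phi}_d(q)$, which must be identified with $\widehat{S}_0^{(\alpha)}(x)+\widehat{S}_1^{(\alpha)}(x)+\widehat{S}_2^{(\alpha)}(x)$.

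The crux is the identity, valid for every $n\geq 2$,
\[
\sum_{\substack{d\mid n\\ d>1}}\widetilde{\Phi}_d(q) = \widetilde{S}_{0,n}(q) + \widetilde{S}_{1,n}(q) + \widetilde{S}_{2,n}(q),
\]
which I would prove by sorting the divisors $d>1$ of $n$ into three classes: the prime powers $d=p^k$; the numbers $d=2p^k$ with $p$ an odd prime; and all remaining $d$, which form exactly the index set of $\widetilde{S}_{0,n}$. For the prime-power class, the identity $\Pi_n(q)=\sum_{d\mid n,\,d>1}\widetilde{\Phi}_d(q)$ --- the displayed expansion preceding \eqref{eqn_Intro_LambertSeriesTerm_exps_v1}, combined with the one-line simplification $\Pi_n(q)=\frac{nq^n}{1-q^n}+n-\frac{1}{1-q}$ of the sum defining $\Pi_n$ in Definition \ref{def_NotationProductsLogDer} --- specialized to $n=p^m$, whose divisors exceeding $1$ are precisely $p,p^2,\dots,p^m$, yields $\sum_{k=1}^{\nu_p(n)}\widetilde{\Phi}_{p^k}(q)=\Pi_{p^{\nu_p(n)}}(q)$; summing over the primes $p\mid n$ reproduces $\widetilde{S}_{1,n}(q)$. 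For the $d=2p^k$ class, the reduction $\Phi_{2p^k}(q)=\Phi_{p^k}(-q)$ from \eqref{eqn_CyclotomicPolysPrimeProperties}, combined with the chain rule and the evaluation $w\to 1/q$ in the definition of $\widetilde{\Phi}$, gives $\widetilde{\Phi}_{2p^k}(q)=\widetilde{\Phi}_{p^k}(-q)$; summing over $1\leq k\leq\nu_p(n)$ and over odd primes $p$ with $2p\mid n$ reproduces $\widetilde{S}_{2,n}(q)$. Substituting this three-fold split, multiplying by $n^{\alpha-1}$, summing over $n\leq x$, and taking $[q^x]$, the three pieces become $\widehat{S}_0^{(\alpha)}(x)$, $\widehat{S}_1^{(\alpha)}(x)$, $\widehat{S}_2^{(\alpha)}(x)$ by their definitions in Proposition \ref{prop_ComponentSum_SeriesCoeffs}, which would complete the proof.

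I expect the main obstacle to be the sign bookkeeping in the $d=2p^k$ step: since the logarithmic derivative in $\widetilde{\Phi}_n(q)$ is taken in the dummy variable $w$ and only afterwards evaluated at $w=1/q$, one must carefully compose the sign coming from $\Phi_{2p^k}(w)=\Phi_{p^k}(-w)$ with both the reciprocal substitution and the explicit $1/q$ prefactor in order to land on exactly $\widetilde{\Phi}_{p^k}(-q)$; this is what makes the $(-q)$-argument in $\widetilde{S}_{2,n}$ --- and the alternating factors $(-1)^{\lfloor x/p^{k-1}\rfloor}$ that later surface in Proposition \ref{prop_ComponentSum_SeriesCoeffs} --- appear with the correct sign. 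By contrast, the truncation of the Lambert series at $n=x$, the emergence of $H_x^{(1-\alpha)}$, and the reindexing of the divisor sums are all routine; and since Proposition \ref{prop_ComponentSum_SeriesCoeffs} may be assumed, the harmonic-number and floor-function evaluations of the coefficients of the $\widetilde{S}_{i,n}(q)$ need not be carried out here.
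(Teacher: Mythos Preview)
Your proposal is correct and follows essentially the same route as the paper's proof: insert the cyclotomic logarithmic-derivative expansion \eqref{eqn_Intro_LambertSeriesTerm_exps_v1} into the truncated Lambert series, extract $[q^x]$ to get $H_x^{(1-\alpha)}$ from the $\frac{1}{1-q}$ terms, and split $\sum_{d\mid n,\,d>1}\widetilde{\Phi}_d(q)$ into $\widetilde{S}_{0,n}+\widetilde{S}_{1,n}+\widetilde{S}_{2,n}$ via the reductions in \eqref{eqn_CyclotomicPolysPrimeProperties}. Your write-up is in fact more explicit than the paper's on the three-way sorting of divisors and on the chain-rule verification that $\widetilde{\Phi}_{2p^k}(q)=\widetilde{\Phi}_{p^k}(-q)$, which the paper handles only by the phrase ``implicitly using the known expansions of the cyclotomic polynomials''; otherwise the arguments coincide.
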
  

\subsection{Remarks} 

We first have a few 
remarks about symmetry in the identity from the theorem in the context of 
negative-order divisor functions of the form 
\[
\sigma_{-\alpha}(x) = \sum_{d|x} \left(\frac{x}{d}\right)^{-\alpha} = 
     \frac{\sigma_{\alpha}(x)}{x^{\alpha}},\ \alpha \geq 0, 
\]
and a brief overview of the applications we 
feature in Section \ref{Section_Applications}. 

\subsubsection{Symmetric forms of the exact formulas} 

For integers $\alpha \in \mathbb{N}$, we can express the ``negative-order'' 
harmonic numbers, $H_n^{(-\alpha)}$, in terms of the 
\emph{generalized Bernoulli numbers} (polynomials) as \emph{Faulhaber's formula}
\begin{align*} 
\sum_{m=1}^{n} m^{\alpha} & = \frac{1}{\alpha+1}\left( 
     B_{\alpha+1}(n+1) - B_{\alpha+1} 
     \right) \\ 
     & = \frac{1}{(\alpha + 1)} \sum_{j=0}^{\alpha} \binom{\alpha+1}{j} B_j n^{\alpha+1-j}.
\end{align*} 
Then since the convolution formula above proves that 
$\sigma_{-\beta}(n) = \sigma_{\beta}(n) / n^{\beta}$ whenever $\beta > 0$, 
we may also expand the right-hand-side of the theorem in the symmetric form of 
\begin{align} 
\notag 
\sigma_{\alpha}(x) & = x^{\alpha} \left(H_x^{(\alpha+1)} + 
     \tau_{-\alpha}(x) + 
     \widehat{S}_1^{(-\alpha)}(x) + \widehat{S}_2^{(-\alpha)}(x) 
     \right), 
\end{align} 
when $\alpha > 0$ is strictly real-valued. 
In particular, we are able to restate Proposition \ref{prop_ComponentSum_SeriesCoeffs} and 
Theorem \ref{theorem_main_SOD_result} together in the following alternate unified form 
where $c_d(x)$ denotes a \emph{Ramanujan sum} (see 
Proposition \ref{remark_RamanujansSum}): 

\begin{theorem}[Symmetric Forms of the Exact Formulas]
\label{theorem_main_SOD_result_V2_symmetric form} 
For any fixed $\alpha \in \mathbb{C}$ and integers $x \geq 1$, we have the 
following formulas: 
\begin{align*} 
\tag{i} 
\widehat{S}_0^{(-\alpha)}(x) & = 
     \sum_{d=1}^{x} H_{\floor{\frac{x}{d}}}^{(\alpha+1)} \cdot \frac{c_d(x)}{d^{\alpha+1}} 
     \cdot \chi_{\PP}(d) \\ 
\tag{ii} 
\widehat{S}_1^{(-\alpha)}(x) & = 
     \sum_{\substack{p \leq x \\ p\mathrm{\ prime}}} \left[ 
     \sum_{k=1}^{\nu_p(x)} \frac{(p-1)}{p^{\alpha k+1}} H_{\frac{x}{p^k}}^{(\alpha+1)} - 
     \frac{1}{p^{\alpha \cdot \nu_p(x) + \alpha + 1}} 
     H_{\floor{\frac{x}{p^{\nu_p(x)+1}}}}^{(\alpha+1)} 
     \right] \\ 
\tag{iii} 
\widehat{S}_2^{(-\alpha)}(x) & = 
     \frac{(-1)^{x}}{2^{\alpha+1}} \sum_{\substack{p \leq x \\ p\mathrm{\ prime}}} \left[ 
     \sum_{k=1}^{\nu_p(x)} \frac{(p-1)}{p^{\alpha k+1}} H_{\floor{\frac{x}{2 p^k}}}^{(\alpha+1)} - 
     \frac{1}{p^{\alpha \cdot \nu_p(x) + \alpha + 1}} 
     H_{\floor{\frac{x}{2 p^{\nu_p(x)+1}}}}^{(\alpha+1)} 
     \right]. 
\end{align*} 
The generalized sum of divisors functions then have the following explicit expansions involving these 
formulas as 
\begin{equation}
\label{eqn_SigmaAlphax_Negative-Order_Symmetry_Ident} 
\sigma_{\alpha}(x) = x^{\alpha} \left(H_x^{(\alpha+1)} + 
     \widehat{S}_0^{(-\alpha)}(x) + 
     \widehat{S}_1^{(-\alpha)}(x) + \widehat{S}_2^{(-\alpha)}(x) 
     \right). 
\end{equation}
\end{theorem}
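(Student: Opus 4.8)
The plan is to deduce all four displayed identities from Theorem~\ref{theorem_main_SOD_result} and Proposition~\ref{prop_ComponentSum_SeriesCoeffs} under the substitution $\alpha \mapsto -\alpha$, together with the elementary reflection identity
\[
\sigma_{-\alpha}(x) = \sum_{d \mid x} d^{-\alpha} = \sum_{d \mid x}\Bigl(\tfrac{x}{d}\Bigr)^{-\alpha} = \frac{\sigma_{\alpha}(x)}{x^{\alpha}},
\]
which holds for \emph{every} $\alpha \in \mathbb{C}$ by reindexing $d \mapsto x/d$. Granting the rewritings (i)--(iii), the master formula~\eqref{eqn_SigmaAlphax_Negative-Order_Symmetry_Ident} is then immediate: applying Theorem~\ref{theorem_main_SOD_result} at $-\alpha$ gives $\sigma_{-\alpha}(x) = H_x^{(1+\alpha)} + \widehat{S}_0^{(-\alpha)}(x) + \widehat{S}_1^{(-\alpha)}(x) + \widehat{S}_2^{(-\alpha)}(x)$, and multiplying through by $x^{\alpha}$ and noting $H_x^{(1-(-\alpha))} = H_x^{(\alpha+1)}$ yields the claim. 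So all of the work lies in recasting the three component sums of Proposition~\ref{prop_ComponentSum_SeriesCoeffs} into the advertised shapes.

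For parts (ii) and (iii) the one genuinely nontrivial ingredient is the evaluation of the integer weight
\[
F_{p,k}(x) := p\left\lfloor \tfrac{x}{p^{k}}\right\rfloor - p\left\lfloor \tfrac{x}{p^{k}} - \tfrac{1}{p}\right\rfloor - 1
\]
appearing inside Proposition~\ref{prop_ComponentSum_SeriesCoeffs}(ii)--(iii). Writing $x = p^{k}\lfloor x/p^{k}\rfloor + r$ with $r = x \bmod p^{k}$ and using $x/p^{k} - 1/p = (x - p^{k-1})/p^{k}$, a one-line case split on whether $r \ge p^{k-1}$ or $r < p^{k-1}$ shows that $F_{p,k}(x) = p - 1$ when the coefficient of $p^{k-1}$ in the base-$p$ expansion of $x$ vanishes, and $F_{p,k}(x) = -1$ otherwise. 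In particular $F_{p,k}(x) = p-1$ for every $1 \le k \le \nu_p(x)$ (where moreover $\lfloor x/p^{k}\rfloor = x/p^{k}$ is an exact integer) and $F_{p,\nu_p(x)+1}(x) = -1$. Substituting this into Proposition~\ref{prop_ComponentSum_SeriesCoeffs}(ii) with $\alpha \mapsto -\alpha$ and simplifying the prime powers ($p^{-\alpha k - 1}(p-1) = (p-1)/p^{\alpha k + 1}$ and $p^{-\alpha(\nu_p(x)+1)-1} = p^{-(\alpha \nu_p(x) + \alpha + 1)}$) produces (ii) verbatim. For (iii) the sum runs over the odd primes $p \ge 3$ dictated by the definition of $\widetilde{S}_{2,n}$, so $p^{k-1}$ is odd and $p^{k-1} \mid x$ throughout the range $1 \le k \le \nu_p(x)+1$; hence $\lfloor x/p^{k-1}\rfloor = x/p^{k-1} \equiv x \pmod{2}$, so the sign $(-1)^{\lfloor x/p^{k-1}\rfloor} = (-1)^{x}$ is independent of $p$ and $k$ and pulls out in front of the whole sum, after which the same evaluation of $F_{p,k}$ and the same bookkeeping of powers of $p$ and $2$ give (iii).

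For part (i) I would first record the closed form of the logarithmic-derivative rational function: differentiating the M\"obius product~\eqref{eqn_KeyCyclotomicPolyProdIdent_ForLLambdaFn}, i.e.\ $\log \Phi_n(w) = \sum_{d \mid n} \mu(n/d)\log(w^{d}-1)$, and then evaluating at $w = 1/q$ gives
\[
\widetilde{\Phi}_n(q) = \sum_{d \mid n} \mu(n/d)\,\frac{d}{1-q^{d}}, \qquad\text{so that}\qquad [q^{x}]\,\widetilde{\Phi}_n(q) = \sum_{d \mid \gcd(n,x)} d\,\mu(n/d) = c_n(x),
\]
the Ramanujan sum (this is precisely the content of Proposition~\ref{remark_RamanujansSum}). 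Inserting this into $\widehat{S}_0^{(\alpha)}(x) = [q^{x}]\sum_{n=1}^{x}\widetilde{S}_{0,n}(q)\,n^{\alpha-1}$, interchanging the sum over $n$ with the sum over the admissible divisors $d$ of $n$ (those with $d > 1$ and $d \ne p^{k}, 2p^{k}$, which we record by $\chi_{\PP}(d) = 1$), and substituting $n = dm$, the inner sum collapses to $\sum_{m=1}^{\lfloor x/d\rfloor}(dm)^{\alpha-1} = d^{\alpha-1}H_{\lfloor x/d\rfloor}^{(1-\alpha)}$, so that
\[
\widehat{S}_0^{(\alpha)}(x) = \sum_{d=1}^{x} H_{\lfloor x/d\rfloor}^{(1-\alpha)}\,\frac{c_d(x)}{d^{1-\alpha}}\,\chi_{\PP}(d);
\]
the substitution $\alpha \mapsto -\alpha$ then gives (i).

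The obstacle I expect is purely one of careful bookkeeping rather than of ideas: pinning down the boundary case $k = \nu_p(x)+1$ of $F_{p,k}(x)$, keeping the index conventions consistent (the $d > 1$ restriction versus the support of $\chi_{\PP}$; the restriction to odd primes and the parity reduction $(-1)^{\lfloor x/p^{k-1}\rfloor} = (-1)^{x}$ in (iii), which genuinely uses that $p$ is odd), and tracking the many powers of $p$ and of $2$. With those settled, every step reduces to substitution, a single interchange of summation, and algebraic simplification.
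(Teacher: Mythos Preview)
Your proposal is correct and follows essentially the same route as the paper's own proof: the master formula~\eqref{eqn_SigmaAlphax_Negative-Order_Symmetry_Ident} is obtained from Theorem~\ref{theorem_main_SOD_result} via the reflection $\sigma_{-\alpha}(x)=\sigma_{\alpha}(x)/x^{\alpha}$, part~(i) is exactly the content of Proposition~\ref{remark_RamanujansSum} at $-\alpha$, and parts~(ii)--(iii) come from Proposition~\ref{prop_ComponentSum_SeriesCoeffs} by splitting the index range $1\le k\le \nu_p(x)+1$ according to whether $p^{k}\mid x$. Your explicit evaluation of $F_{p,k}(x)$ via the base-$p$ digit at position $k-1$, and your parity reduction $(-1)^{\lfloor x/p^{k-1}\rfloor}=(-1)^{x}$ using that $p^{k-1}$ is odd and divides $x$, make precise exactly the case analysis the paper only sketches.
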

We notice that this symmetry identity given in 
Theorem \ref{theorem_main_SOD_result_V2_symmetric form} 
provides a curious, and necessarily deep, relation between the 
Bernoulli numbers and the partial sums of the Riemann zeta function 
involving nested sums over the primes. 
It also leads to a direct proof of the known asymptotic results for the 
summatory functions \cite[\S 27.11]{NISTHB} 
\[
\sum_{n \leq x} \sigma_{\alpha}(n) = \frac{\zeta(\alpha+1)}{\alpha+1} x^{\alpha+1} + 
     O\left(x^{\max(1, \alpha)}\right),\ \alpha > 0, \alpha \neq 1. 
\]
We will explore this direct proof based on 
Theorem \ref{theorem_main_SOD_result_V2_symmetric form} 
in more detail as an application given in 
Section \ref{subSection_Apps_Asymptotics}. 

\section{Proofs of our new results} 
\label{Section_Proofs} 

\subsection{Motivating the proof of the new formulas} 
\label{subSection_ProofMotivations_RamSums}  

\begin{example}
We first revisit a computational 
example of the rational functions defined by the logarithmic derivatives in 
Definition \ref{def_NotationProductsLogDer} from 
Table \ref{table_qnOver1mqn_LambertSeriesTerm_exps}. 
We make use of the next variant of the identity in 
\eqref{eqn_KeyCyclotomicPolyProdIdent_ForLLambdaFn} in the proof below 
which is obtained by M\"oebius inversion:  
\begin{align} 
\label{eqn_CyclotomicFactorization_stmt_v2} 
\Phi_n(q) = \prod_{d|n} (q^d-1)^{\mu(n/d)}. 
\end{align} 
In the case of our modified rational cyclotomic polynomial functions, 
$\widetilde{\Phi}_n(q)$, when $n := 15$, we use this product to expand the 
definition of the function as 
\begin{align*} 
\widetilde{\Phi}_{15}(q) & = 
\frac{1}{x} \cdot \frac{d}{dq}\left[\log\left(\frac{(1-q^3)(1-q^5)}{(1-q)(1-q^{15})} 
     \right)\right]\Biggr|_{q \rightarrow 1/q} \\ 
     & = 
     \frac{3}{1-q^3} + \frac{5}{1-q^5} - \frac{1}{1-q} - \frac{15}{1-q^{15}} \\ 
     & = 
     \frac{8-7q+5q^3-4q^4+3q^5-q^7}{1-q+q^3-q^4+q^5-q^7+q^8}. 
\end{align*} 
The procedure for transforming the difficult-looking terms involving the 
cyclotomic polynomials when the Lambert series terms, $q^n / (1-q^n)$, are 
expanded in partial fractions as in 
Table \ref{table_qnOver1mqn_LambertSeriesTerm_exps} 
is essentially the same as this example for the 
cases we will encounter here. 
In general, we have the next simple lemma when $n$ is a positive integer. 
\end{example} 

\begin{lemma}[Key characterizations of the tau divisor sums] 
\label{lemma_PhiTildend_MudOverTerm_DivSum_exp} 
For integers $n \geq 1$ and any indeterminate $q$, we have the following 
expansion of the functions in \eqref{eqn_Pin_Cyclotomic2_defs}: 
\begin{align*} 
\widetilde{\Phi}_n(q) & = \sum_{d|n} \frac{d \cdot \mu(n/d)}{(1-q^d)}. 
\end{align*} 
In particular, we have that 
\begin{align*} 
\widetilde{S}_{0,n}(q) & = \sum_{d|n} \sum_{r|d} 
     \frac{r \cdot \widetilde{\chi}_{\PP}(d) \cdot \mu(d/r)}{(1-q^r)}. 
\end{align*} 
\end{lemma}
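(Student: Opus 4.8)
The plan is to establish the first identity directly from the Möbius-inversion form \eqref{eqn_CyclotomicFactorization_stmt_v2} of the cyclotomic polynomial, and then obtain the second identity as a formal consequence of the first by expanding the definition of $\widetilde{S}_{0,n}(q)$ as a divisor sum.

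First I would prove $\widetilde{\Phi}_n(q) = \sum_{d|n} \frac{d\,\mu(n/d)}{1-q^d}$. Starting from $\Phi_n(w) = \prod_{d|n}(w^d-1)^{\mu(n/d)}$, take logarithms to get $\log\Phi_n(w) = \sum_{d|n}\mu(n/d)\log(w^d-1)$, differentiate in $w$ to get $\frac{d}{dw}\log\Phi_n(w) = \sum_{d|n}\mu(n/d)\frac{d\,w^{d-1}}{w^d-1}$, then substitute $w \to 1/q$ and multiply by $1/q$ per the definition in \eqref{eqn_Pin_Cyclotomic2_defs}. The substitution gives $\frac{1}{q}\sum_{d|n}\mu(n/d)\frac{d\,q^{-(d-1)}}{q^{-d}-1} = \sum_{d|n}\mu(n/d)\frac{d\,q^{-d}}{q^{-d}-1}$, and since $\frac{q^{-d}}{q^{-d}-1} = \frac{1}{1-q^d}$, this collapses exactly to $\sum_{d|n}\frac{d\,\mu(n/d)}{1-q^d}$. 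This is essentially the $n=15$ computation in the preceding example carried out in general, so it is routine; I would present it compactly. One can alternatively verify it against $\Pi_n$ and the closed forms in \eqref{eqn_Pin_Cyclotomic2_defs}, but the logarithmic-derivative route is cleanest and uniform in $n$.

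Next, for the second identity, I would substitute the just-proved formula for $\widetilde{\Phi}_d(q)$ into the definition $\widetilde{S}_{0,n}(q) = \sum_{d|n,\, d>1,\, d\neq p^k,2p^k}\widetilde{\Phi}_d(q)$. Using the indicator $\widetilde{\chi}_{\PP}(d)$ from Definition \ref{def_NotationProductsLogDer}, the constraints ``$d>1$'' and ``$d\neq p^k,2p^k$'' are together encoded by the factor $\widetilde{\chi}_{\PP}(d)$ (note $\widetilde{\chi}_{\PP}(1)=0$ since $1$ is vacuously excluded, or can be absorbed since $\mu$-sums over $d=1$ contribute a lone $\frac{1}{1-q}$ term that is handled by convention), so the sum over the restricted divisors becomes $\sum_{d|n}\widetilde{\chi}_{\PP}(d)\,\widetilde{\Phi}_d(q) = \sum_{d|n}\widetilde{\chi}_{\PP}(d)\sum_{r|d}\frac{r\,\mu(d/r)}{1-q^r}$, which is exactly the claimed double sum after interchanging to the stated order of summation. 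The only genuine care needed is bookkeeping at $d=1$: one must confirm that the convention $\widetilde{\chi}_{\PP}(1)=0$ (the text defines $\widetilde{\chi}_{\PP}$ as the indicator of $n$ \emph{not} of the form $p^k,2p^k$, and $1$ is not of that form for any $k\geq 1$, so in fact $\widetilde{\chi}_{\PP}(1)$ could be $1$ under a literal reading) is consistent with the ``$d>1$'' restriction — I expect the paper intends $\widetilde{\chi}_{\PP}(1)=0$, matching the $d>1$ condition, and I would state this explicitly.

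The main obstacle is therefore not analytical but notational: pinning down the exact conventions for $\widetilde{\chi}_{\PP}$ at $n=1$ and for the range of the inner index (whether $d>1$ is folded into $\widetilde{\chi}_{\PP}$ or kept separate), and making sure the order-of-summation swap is legitimate — which it trivially is, since for fixed $n$ both sums are finite. Everything else reduces to the logarithmic-derivative computation, which I would verify once in full generality and then reuse.
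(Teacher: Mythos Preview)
Your proposal is correct and follows essentially the same route as the paper: both arguments start from the M\"obius-inverted product form $\Phi_n(w)=\prod_{d|n}(w^d-1)^{\mu(n/d)}$, take logarithmic derivatives, and perform the substitution $w\mapsto 1/q$ with the factor $1/q$ from Definition~\ref{def_NotationProductsLogDer} to obtain $\widetilde{\Phi}_n(q)=\sum_{d|n}\frac{d\,\mu(n/d)}{1-q^d}$; the second identity then follows by direct substitution into the definition of $\widetilde{S}_{0,n}(q)$. Your remark about the convention $\widetilde{\chi}_{\PP}(1)=0$ is well taken---the paper does not make this explicit, and a literal reading of Definition~\ref{def_NotationProductsLogDer} would give $\widetilde{\chi}_{\PP}(1)=1$---so stating it clearly, as you propose, is an improvement.
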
 
\begin{proof} 
The proof is essentially the same as the example given above. 
Since we can refer to 
this illustrative example, we only need to sketch the details to the 
remainder of the proof. In particular, we notice that since we have the known 
identity for the cyclotomic polynomials given by 
\begin{align} 
\notag 
\Phi_n(x) & = \prod_{d|n} (1-q^d)^{\mu(n/d)} 
\end{align} 
we can take logarithmic derivatives to obtain that 
\begin{align*} 
\frac{1}{x} \cdot \frac{d}{dq}\left[\log\left(1-q^d\right)^{\pm 1}\right] 
     \Biggr|_{q \rightarrow 1/q} & = 
     \mp \frac{d}{q^d\left(1-\frac{1}{q^d}\right)} = 
     \pm \frac{d}{1-q^d}, 
\end{align*} 
which applied inductively leads us to our result. 
\end{proof} 

\begin{prop}[Connections to Ramanujan sums] 
\label{remark_RamanujansSum} 
Let the following notation denote a shorthand for the divisor sum terms in 
Theorem \ref{theorem_main_SOD_result}: 
\[ 
\tau_{\alpha}(x) := \widehat{S}_0^{(\alpha)}(x) = 
     [q^x] \sum_{n=1}^x \widetilde{S}_{0,n}(q) n^{\alpha-1}. 
\] 
We have the following two characterizations of the functions $\tau_{\alpha}(x)$ 
expanded in terms of Ramanujan's sum, $c_q(n)$, where $\mu(n)$ denotes the M\"obius function and 
$\varphi(n)$ is Euler's totient function: 
\begin{align*} 
\tau_{\alpha+1}(x) & = \sum_{\substack{d=1 \\ d \neq p^k,2p^k}}^x 
     H_{\floor{\frac{x}{d}}}^{(-\alpha)} \cdot d^{\alpha} \cdot c_d(x) \\ 
     & = \sum_{\substack{d=1 \\ d \neq p^k,2p^k}}^x 
     H_{\floor{\frac{x}{d}}}^{(-\alpha)} \cdot d^{\alpha} \cdot 
     \mu\left(\frac{d}{(d, x)}\right) 
     \frac{\varphi(d)}{\varphi\left(\frac{d}{(d,x)}\right)}. 
\end{align*} 
\end{prop}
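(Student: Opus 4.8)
The plan is to reduce the entire statement to a single coefficient identity, $[q^x]\widetilde{\Phi}_d(q) = c_d(x)$, and then to carry out a routine interchange of the finite sums defining $\tau_\alpha(x)$. For the coefficient identity I would invoke Lemma \ref{lemma_PhiTildend_MudOverTerm_DivSum_exp}, which writes $\widetilde{\Phi}_d(q) = \sum_{r \mid d}\frac{r\,\mu(d/r)}{1-q^r}$, a rational function holomorphic at $q=0$; expanding each summand as $\frac{1}{1-q^r} = \sum_{j\ge 0}q^{rj}$ and reading off the coefficient of $q^x$ for $x\ge 1$ gives
\[
[q^x]\,\widetilde{\Phi}_d(q) \;=\; \sum_{r\mid d}r\,\mu(d/r)\,\Iverson{r\mid x} \;=\; \sum_{r \mid (d,x)} r\,\mu(d/r).
\]
By H\"older's classical evaluation of Ramanujan's sum (see, e.g., \cite[\S 27.10]{NISTHB}), this last expression is precisely $c_d(x) = \mu\!\bigl(\tfrac{d}{(d,x)}\bigr)\frac{\varphi(d)}{\varphi(d/(d,x))}$; alternatively, one obtains $[q^x]\widetilde{\Phi}_d(q) = c_d(x)$ in one stroke from the product definition \eqref{eqn_CyclotomicPoly_ProductDef}, since $\frac1q\frac{d}{dw}[\log\Phi_d(w)]\big|_{w\to 1/q} = \sum_{(k,d)=1}\sum_{j\ge 0}e^{2\pi\imath kj/d}\,q^j = \sum_{j\ge 0}c_d(j)\,q^j$. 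This identity is the heart of the proposition.

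Next I would substitute into $\tau_\alpha(x) = [q^x]\sum_{n=1}^{x}\widetilde{S}_{0,n}(q)\,n^{\alpha-1}$, using $\widetilde{S}_{0,n}(q) = \sum_{d \mid n,\ d>1,\ d\neq p^k,2p^k}\widetilde{\Phi}_d(q)$, and interchange the order of summation over $n$ and $d$. For each admissible $d$ the index $n$ runs exactly over the multiples $n = dm$ with $1\le m\le\floor{x/d}$ (so only $d\le x$ contribute), and since the resulting inner sum $\sum_{m=1}^{\floor{x/d}}(dm)^{\alpha-1} = d^{\alpha-1}H_{\floor{x/d}}^{(1-\alpha)}$ carries no dependence on $q$, the operator $[q^x]$ acts only on $\widetilde{\Phi}_d(q)$. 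This yields
\[
\tau_\alpha(x) \;=\; \sum_{\substack{d=2\\ d\neq p^k,2p^k}}^{x} H_{\floor{x/d}}^{(1-\alpha)}\cdot d^{\alpha-1}\cdot c_d(x),
\]
and replacing $\alpha$ by $\alpha+1$ gives the first displayed identity of the proposition (the $d=1$ term is vacuous, in accordance with the constraint $d>1$ inherited from $\widetilde{S}_{0,n}(q)$); substituting H\"older's formula for $c_d(x)$ then produces the second line.

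I do not expect a genuine obstacle here: the argument is a coefficient extraction followed by a Fubini-type interchange of finitely many terms. The two points that need care are (i) confirming that $\widetilde{\Phi}_d(q)$ is a bona fide power series at $q=0$ so that $[q^x]\widetilde{\Phi}_d(q)$ is well defined --- immediate from the partial-fraction form of Lemma \ref{lemma_PhiTildend_MudOverTerm_DivSum_exp} --- and (ii) transporting the restriction $d\neq p^k,2p^k$ (together with $d>1$) faithfully through the change of summation order, which is purely notational. If one prefers not to quote H\"older's identity, the grouping over primitive roots of unity used above gives a self-contained derivation of $\sum_{r\mid(d,x)}r\,\mu(d/r) = c_d(x) = \mu(d/(d,x))\varphi(d)/\varphi(d/(d,x))$, which completes the proof.
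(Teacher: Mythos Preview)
Your proof is correct and follows essentially the same approach as the paper: both expand $\widetilde{\Phi}_d(q)$ via Lemma~\ref{lemma_PhiTildend_MudOverTerm_DivSum_exp}, extract the coefficient of $q^x$ to obtain $\sum_{r\mid(d,x)} r\,\mu(d/r)$, interchange the finite sums over $n$ and $d$, and identify the Ramanujan sum. The only organizational difference is that you isolate the identity $[q^x]\widetilde{\Phi}_d(q) = c_d(x)$ up front and then swap sums, whereas the paper first writes out the triple sum, extracts the coefficient, and swaps afterwards; the content is the same.
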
 
\begin{proof} 
First, we observe that the contribution of the first (zero-indexed) sums in 
Theorem \ref{theorem_ExactFormulas_v2} correspond to the coefficients 
\begin{align*} 
\tau_{\alpha+1}(x) 
     & = 
     [q^x]\left(\sum_{k=1}^x \sum_{\substack{d|k \\ d \neq p^s,2p^s}} \sum_{r|d} 
     \frac{r \cdot \mu(d/r)}{(1-q^r)} k^{\alpha}\right) \\ 
\notag 
     & = 
     \sum_{k=1}^x \sum_{r|x} \sum_{\substack{d|k \\ d \neq p^s,2p^s}} r \cdot \mu(d/r) 
     \cdot \Iverson{r|d} \cdot k^{\alpha} \\ 
\label{eqn_RamSumIdent_ref} 
     & = 
     \sum_{k=1}^x \sum_{\substack{d|k \\ d \neq p^s,2p^s}} \sum_{r|(d,x)} r \cdot \mu(d/r) 
     \cdot k^{\alpha}. 
\end{align*} 
Then since we can easily prove the identity that 
\[
\sum_{k=1}^{x} \sum_{d|k} f(d) g(k/d) = 
     \sum_{d=1}^{x} f(d) \left(\sum_{k=1}^{\floor{\frac{x}{d}}} g(k)\right), 
\]
for any prescribed arithmetic functions $f$ and $g$, 
we can also expand the right-hand-side of the previous equation as 
\begin{equation} 
\label{eqn_tauAlphaFunc_KeyExp} 
\tau_{\alpha+1}(x) = \sum_{\substack{d=1 \\ d \neq p^k,2p^k}}^x 
     \left(\sum_{r|(d,x)} r \mu(d/r)\right) 
     H_{\floor{\frac{x}{d}}}^{(-\alpha)} \cdot d^{\alpha}. 
\end{equation} 
Thus the identities stated in the proposition 
follow by expanding out Ramanujan's sum in the form of 
\citep[\S 27.10]{NISTHB} \citep[\S A.7]{ADDITIVENUMT} \citep[\cf \S 5.6]{HARDYWRIGHTNUMT} 
\[
c_q(n) = \sum_{d|(q,n)} d \cdot \mu(q/d). 
     \qedhere
\] 
\end{proof} 

\begin{remark}
Ramanujan's sum also satisfies the convenient bound that $|c_q(n)| \leq (n, q)$ for all $n,q \geq 1$, 
which can be used to obtain asymptotic estimates in the form of 
upper bounds for these sums when $q$ is not prime or a prime power. 
Additionally, it is related to periodic exponential sums (modulo $k$) of the more general forms 
\[
s_k(n) = \sum_{d|(n,k)} f(d) g\left(\frac{k}{d}\right), 
\]
where $s_k(n)$ has the finite Fourier series expansion 
\[
s_k(n) = \sum_{m=1}^{k} a_k(m) e^{2\pi\imath n/k}, 
\]
with coefficients given by the divisor sums \citep[\S 27.10]{NISTHB} 
\[
a_k(m) = \sum_{d|(m,k)} g(d) f\left(\frac{k}{d}\right) \frac{d}{k}. 
\]
It turns out that the terms in the formulas for $\sigma_{\alpha}(x)$ represented by these 
sums, $\tau_{\alpha}(x)$ provide detailed insight into the error estimates 
for the summatory functions over the generalized sum-of-divisors functions. 
We computationally investigate the properties of the new expansions we can obtain for 
these sums using the new formulas from the theorem as applications in 
Section \ref{Section_Applications}. 
\end{remark} 

\subsection{Proofs of the theorems} 

\begin{proof}[Proof of Theorem \ref{theorem_ExactFormulas_v2}] 
We begin with a well-known divisor product formula involving the cyclotomic polynomials 
when $n \geq 1$ and $q$ is fixed: 
\[
q^n-1 = \prod_{d|n} \Phi_d(q). 
\]
Then by logarithmic differentiation we can see that 
\begin{align} 
\label{eqn_proof_tag_eqn_i-v1} 
\frac{q^n}{1-q^n} & = -1 + \frac{1}{n(1-q)} + \frac{1}{n} \sum_{\substack{d|n \\ d > 1}} 
     \widetilde{\Phi}_d(q) \\ 
\notag 
     & = -1 + \frac{1}{n(1-q)} + \frac{1}{n}\left( 
     \widetilde{S}_{0,n}(q) + \widetilde{S}_{1,n}(q) + 
     \widetilde{S}_{2,n}(q)\right). 
\end{align} 
The last equation is obtained from the first expansion in \eqref{eqn_proof_tag_eqn_i-v1} 
above by identifying the next two sums as 
\[
\Pi_n(q) = \sum_{\substack{d|n \\ d>1}} \widetilde{\Phi}_n(1/q) = 
     \sum_{j=0}^{n-2} \frac{(n-1-j) q^j (1-q)}{1-q^n}. 
\]
Here we are implicitly using the known expansions of the cyclotomic polynomials 
which condense the order $n$ of the polynomials by exponentiation of the 
indeterminate $q$ when $n$ contains a factor of a prime power given by 
\eqref{eqn_CyclotomicPolysPrimeProperties} in the introduction. 
Finally, we complete the proof by summing the right-hand-side of 
\eqref{eqn_proof_tag_eqn_i-v1} over $n \leq x$ times the weight $n^{\alpha}$ to 
obtain the $x^{th}$ partial sum of the Lambert series generating function for 
$\sigma_{\alpha}(x)$ \citep[\S 17.10]{HARDYWRIGHTNUMT} \citep[\S 27.7]{NISTHB}, 
which since each term in the summation contains a power of $q^n$ is $(x+1)$-order 
accurate to the terms in the infinite series. 
\end{proof} 

\begin{proof}[Proof of Proposition \ref{prop_ComponentSum_SeriesCoeffs}] 
The identity in (i) follows from Lemma \ref{lemma_PhiTildend_MudOverTerm_DivSum_exp}. 
Since $\Phi_{2p}(q) = \Phi_p(-q)$ for any prime $p$, we are essentially in the 
same case with the two component sums in (ii) and (iii). We outline the proof of our 
expansion for the first sum, $\widetilde{S}_{1,n}(q)$, and note the small 
changes necessary along the way to adapt the proof to the second sum case. 
By the properties of the cyclotomic polynomials expanded in 
\eqref{eqn_CyclotomicPolysPrimeProperties}, we may 
factor the denominators of $\Pi_{p^{\varepsilon_p(n)}}(q)$ into smaller 
irreducible factors of the same polynomial, $\Phi_p(q)$, with inputs varying 
as special prime-power powers of $q$. More precisely, we may expand 
\[
Q_{p,k}^{(n)}(q) := \frac{\sum\limits_{j=0}^{p-2} (p-1-j) q^{p^{k-1}j}}{ 
     \sum\limits_{i=0}^{p-1} q^{p^{k-1}i}}, 
\] 
and 
\begin{align*} 
\widetilde{S}_{1,n}(q) & = \sum_{p \leq n} \sum_{k=1}^{\nu_p(n)} 
     Q_{p,k}^{(n)}(q) \cdot p^{k-1}. 
\end{align*} 
In performing the sum $\sum_{n \leq x} Q_{p,k}^{(n)}(q) p^{k-1} n^{\alpha-1}$, 
these terms of the $Q_{p,k}^{(n)}(q)$ occur again, or have a repeat 
coefficient, every $p^k$ terms, so we form the coefficient sums for these terms as 
\begin{equation*} 
\sum_{i=i}^{\left\lfloor \frac{x}{p^k} \right\rfloor} \left(i p^k\right)^{ 
     \alpha-1} \cdot p^{k-1} = p^{k\alpha-1} \cdot 
     H_{\left\lfloor \frac{x}{p^k} \right\rfloor}^{(1-\alpha)}. 
\end{equation*} 
We can also compute the inner sums in the previous equations exactly for any 
fixed $t$ as 
\[
     \sum_{j=0}^{p-2} (p-1-j) t^j = \frac{(p-1)-pt-t^p}{(1-t)^2}, 
\]
where the corresponding paired denominator sums in these terms are given by 
$1+t+t^2+\cdots+t^{p-1} = (1-t^p) / (1-t)$. 
We now assemble the full sum over $n \leq x$ we are after in this proof as 
\begin{align*} 
\sum_{n \leq x} \widetilde{S}_{1,n}(q) \cdot n^{\alpha-1} & = 
     \sum_{p \leq x} \sum_{k=1}^{\varepsilon_p(x)} p^{k\alpha-1} 
     H_{\left\lfloor \frac{x}{p^k} \right\rfloor}^{(1-\alpha)}
     \frac{(p-1)-p q^{p^{k-1}} + q^{p^k}}{(1-q^{p^{k-1}})(1-q^{p^k})}. 
\end{align*} 
The corresponding result for the second sums is obtained similarly with the 
exception of sign changes on the coefficients of the powers of $q$ in the last 
expansion. 

We compute the series coefficients of one of the three cases in the previous 
equation to show our method of obtaining the full formula. 
In particular, the right-most term in these expansions leads to the double sum 
\begin{align*} 
C_{3,x,p} & := [q^x] \frac{q^{p^k}}{(1\mp q^{p^{k-1}})(1\mp q^{p^k})} \\ 
     & \phantom{:} = [q^x] \sum_{n,j \geq 0} (\pm 1)^{n+j} 
     q^{p^{k-1}(n+p+jp)}. 
\end{align*} 
Thus we must have that $p^{k-1} | x$ in order to have a non-zero coefficient and 
for $n := x / p^{k-1}-jp-p$ with $0 \leq j \leq x/p^k-1$ we can compute these 
coefficients explicitly as 
\begin{align*} 
C_{3,x,p} & := (\pm 1)^{\lfloor x / p^{k-1} \rfloor} \times \sum_{j=0}^{ 
     \lfloor x/p^k-1 \rfloor} 1 = 
     (\pm 1)^{\lfloor x / p^{k-1} \rfloor} 
     \left\lfloor \frac{x}{p^k}-1 \right\rfloor + 1 = 
     (\pm 1)^{\lfloor x / p^{k-1} \rfloor} 
     \left\lfloor \frac{x}{p^k} \right\rfloor. 
\end{align*} 
With minimal simplifications we have arrived at our claimed result 
in the proposition. The other two similar computations follow similarly. 
\end{proof} 

\begin{proof}[Proof of Theorem \ref{theorem_main_SOD_result_V2_symmetric form}] 
We first note that since for non-negative $\alpha \geq 0$, we have 
\[
\sigma_{-\alpha}(x) = \sum_{d|x} d^{-\alpha} = 
     \sum_{d|x} \left(\frac{x}{d}\right)^{-\alpha} = 
     \frac{\sigma_{\alpha}(x)}{x^{\alpha}}, 
\]
we can see that the formula in \eqref{eqn_SigmaAlphax_Negative-Order_Symmetry_Ident}
follows immediately from Theorem \ref{theorem_main_SOD_result}. It remains to 
prove the subformulas in (i)--(iii) of the theorem. The first formula for 
$S_0^{(-\alpha)}(x)$ corresponds to the formulas we derived in 
Proposition \ref{remark_RamanujansSum} of the previous subsection for these cases of 
negative-order $\alpha$. The second two formulas follow from 
Proposition \ref{prop_ComponentSum_SeriesCoeffs} by expanding the 
cases of the floor function inputs according to the inner 
index $k$ in the ranges $k \in [1, \nu_p(x)]$, i.e., where $x / p^k \in \mathbb{Z}^{+}$, 
and then the remainder index case of $k := \nu_p(x) + 1$. 
\end{proof} 

\section{Applications of the new formulas} 
\label{Section_Applications} 

\subsection{Asymptotics of sums of the divisor functions} 
\label{subSection_Apps_Asymptotics} 

We can use the new exact formula proved by 
Theorem \ref{theorem_ExactFormulas_v2} to asymptotically estimate 
partial sums, or \emph{average orders} of the respective arithmetic functions, 
of the next form for integers $x \geq 1$:
\begin{align} 
\label{eqn_def_SummatoryFunc_SigmaAlphaBetaX} 
\Sigma^{(\alpha, \beta)}(x) & := \sum_{n \leq x} \frac{\sigma_{\alpha}(n)}{n^{\beta}}. 
\end{align} 
We similarly define $\Sigma_{\alpha}(x) := \Sigma^{(\alpha,0)}(x)$ in this notation. 
In the special cases where $\alpha := 0, 1$, we restate a few more famous 
formulas providing well-known classically (and newer) established 
asymptotic bounds for sums of this form 
as follows where $\gamma \approx 0.577216$ is Euler's gamma constant, 
$d(n) \equiv \sigma_0(n)$ denotes the (Dirichlet) \emph{divisor function}, and 
$\sigma(n) \equiv \sigma_1(n)$ the (ordinary) \emph{sum-of-divisors function} 
\citep{HUXLEY2003,CARELLA-EXPLFORMULA-DIVFN,WALFISZ63} \citep[\cf \S 27.11]{NISTHB}: 
\begin{align} 
\label{eqn_DivFn_WellKnown_AsympFormulas}
\Sigma_0(x) & := \sum_{n \leq x} d(n) = 
     x \log x + (2\gamma-1) x + O\left(x^{\frac{131}{416}}\right) \\ 
\notag 
\Sigma^{(0,1)}(x) & := \sum_{n \leq x} \frac{d(n)}{n} = 
     \frac{1}{2} (\log x)^2 + 2\gamma \log x + 
     O\left(x^{-2/3}\right) \\ 
\notag 
\Sigma_1(x) & := \sum_{n \leq x} \sigma(n) = \frac{\pi^2}{12} x^2 + O(x \log^{2/3} x).
\end{align} 
For the most part, we suggest tackling potential 
improvements to these possible asymptotic formulas 
through our new results given in the theorem and in the symmetric identity 
\eqref{eqn_SigmaAlphax_Negative-Order_Symmetry_Ident} as a 
topic for exploration and subsequent numerical verification using the 
computational tools at our disposal. 

\begin{example}[Average order of the divisor function] 
For comparison with the leading terms in the first of the previous 
expansions, we can prove the next formula using summation by parts for 
integers $r \geq 1$. 
\begin{equation*} 
\sum_{j=1}^n H_j^{(r)} = (n+1) H_n^{(r)} - H_n^{(r-1)} 
\end{equation*} 
Then using inexact approximations for the summation terms in the theorem, we are 
able to evaluate the leading non-error term in the following sum for large 
integers $t \geq 2$ since $H_n^{(1)} \sim \log n + \gamma$: 
\begin{align*} 
\Sigma_t^{(0, 0)} & = -t + (t+1) H_t^{(1)} + O(t \cdot \log^3(t)) \\ 
     & \sim 
     (t+1) \log t + (\gamma-1) t + \gamma + O(t \cdot \log^3(t)). 
\end{align*} 
It is similarly not difficult to obtain a related estimate for the second 
famous divisor sum, $\Sigma_t^{(0, 1)}$, using the symmetric identity in 
\eqref{eqn_SigmaAlphax_Negative-Order_Symmetry_Ident} of the introduction. 
\end{example} 

There is an obvious finite sum identity which 
generates partial sums of the generalized sum-of-divisors functions in the 
following forms \citep[\cf \S 7]{CARELLA-EXPLFORMULA-DIVFN} 
\citep{ONCIRCLE-AND-DIVISOR}: 
\begin{align} 
\label{eqn_SigmaAlphax_formulas_v1} 
\Sigma_{\alpha}(x) & := \sum_{n \leq x} \sigma_{\alpha}(n) = 
     \sum_{d \leq x} \left\lfloor \frac{x}{d} \right\rfloor \cdot d^{\alpha} \\ 
\notag 
     & \phantom{:} = 
     \sum_{m=0}^{\left\lfloor \frac{\log x}{\log 2} \right\rfloor} 
     \sum_{d=1}^{\left\lfloor \frac{x}{2^m} \right\rfloor - 
     \left\lfloor \frac{x}{2^{m+1}} \right\rfloor} 
     \left\lfloor \frac{x}{d+\left\lfloor x 2^{-(m+1)}\right\rfloor} \right\rfloor 
     \left(d+\left\lfloor \frac{x}{2^{m+1}} \right\rfloor \right)^k \\ 
\notag
     & \phantom{:} = 
     \sum_{m=0}^{\left\lfloor \frac{\log x}{\log p} \right\rfloor} 
     \sum_{d=1}^{\left\lfloor \frac{x}{p^m} \right\rfloor - 
     \left\lfloor \frac{x}{p^{m+1}} \right\rfloor} 
     \left\lfloor \frac{x}{d+\left\lfloor x p^{-(m+1)}\right\rfloor} \right\rfloor 
     \left(d+\left\lfloor \frac{x}{p^{m+1}} \right\rfloor \right)^k,\ 
     p \in \mathbb{Z}, p \geq 2. 
\end{align} 
Since the sums $\sum_{d=1}^m (d+a)^k$ are readily expanded by the Bernoulli 
polynomials, we may approach summing the last finite sum identity by parts 
\citep[\S 24.4(iii)]{NISTHB}. 
The relation of sums of this type corresponding to the divisor function case 
where $\alpha := 0$ are considered in the context of the Dirichlet divisor problem 
in \citep{ONCIRCLE-AND-DIVISOR} as are the evaluations of several sums 
involving the floor function such as we have in the statement of 
Theorem \ref{theorem_ExactFormulas_v2}. 
Similarly, we can arrive at additional exact identities for the average order sum 
variants defined above:  
\begin{align*} 
\Sigma_x^{(\alpha,\beta)} & = \sum_{d=1}^{x} d^{\alpha-\beta} \cdot H_{\floor{\frac{x}{d}}}^{(\beta)}. 
\end{align*} 
We can extend the known classical result for the sums $\Sigma_{\alpha}(x)$ given by 
\[
\Sigma_{\alpha}(x) := \sum_{n \leq x} \sigma_{\alpha}(x) = 
     \frac{\zeta(\alpha+1)}{\alpha+1} x^{\alpha+1} + 
     O\left(x^{\max(1, \alpha)}\right),\ \alpha > 0, \alpha \neq 1, 
\]
to the cases of these modified summatory functions using the new formulas proved in 
Theorem \ref{theorem_main_SOD_result_V2_symmetric form}. 
The next result provides the exact details of the 
limiting asymptotic relations for the sums $\Sigma^{(\alpha, \beta)}(x)$. 

\begin{theorem}[Asymptotics for Summatory Functions] 
\label{theorem_SigmaAlphaBeta_SumFunc_Asymptotics} 
For integers $\alpha > 1$ and $2 \leq \beta \leq \alpha$, 
we have that the summatory functions 
$\Sigma^{(\alpha, \beta)}(x)$ defined in 
\eqref{eqn_def_SummatoryFunc_SigmaAlphaBetaX} above 
satisfy the following asymptotic properties: 
\begin{align*}
\Sigma^{(\alpha, \beta)}(x) & = 
     \frac{\zeta(\alpha+1) x^{\alpha+1-\beta}}{(\alpha+1-\beta)} \left( 
     1 - C_1(\alpha) + C_{2,0}(\alpha) + C_3(\alpha) + 
     C_6(\beta) + C_{7,0}(\alpha)\right) \\ 
     & \phantom{=\ } + 
     \sum_{j=1}^{\alpha-\beta} \binom{\alpha+1-\beta}{j} 
     \frac{B_j x^{\alpha+1-\beta-j}}{\alpha+1-\beta} 
     \left(1 + C_{2,j}(\alpha) + C_{7,j}(\alpha)\right) + \sum_{j=0}^{\alpha-\beta} 
     C_{4,j}(\alpha, \beta) \zeta(\alpha+1) x^j \\ 
     & \phantom{=\ } + 
     \sum_{j=0}^{\alpha-\beta} \binom{\alpha-\beta}{j} \frac{C_5(\alpha, \beta) 
     (-1)^{\alpha-\beta-j} E_j}{2^{2\alpha+2-\beta}} + 
     O\left(\frac{x}{\log x}\right),  
\end{align*} 
where the absolute constants (depending only on $\alpha$ and $m$) are defined by 
\begin{align*} 
C_1(\alpha) & := \sum_{\substack{p \geq 2 \\ p\mathrm{\ prime}}} 
     \frac{(p-2)}{p(p-1)(p^{\alpha+1}-1)}\left[ 
     \frac{(p-1)}{p(p^{\alpha}-1)} - \frac{1}{p^{\alpha}}
     \right] \\ 
C_{2,m}(\alpha) & := \sum_{\substack{p \geq 2 \\ p\mathrm{\ prime}}} 
     \frac{(p-1)}{p^{\alpha+2-m} (p^{\alpha}-1)} \\ 
C_{3}(\alpha) & := \sum_{\substack{p \geq 3 \\ p\mathrm{\ prime}}} 
     \frac{(p-2)}{2^{\alpha+1} p(p-1)(p^{\alpha+1}-1)}\left[ 
     \frac{(p-1)}{p(p^{\alpha}-1)} - \frac{1}{p^{\alpha}}
     \right] \\ 
C_{4,m}(\alpha, \beta) & := \sum_{\substack{p \geq 2 \\ p\mathrm{\ prime}}} 
     \sum_{k=0}^{\alpha-\beta} \binom{\alpha-\beta}{k} \binom{\alpha-\beta-k}{m} 
     \frac{(-1)^{k+m} E_k \cdot (p-1)}{2^{2\alpha+2-\beta-m} p^{\beta+1+m} 
     (p^{\alpha}-1)} \\ 
C_5(\alpha, \beta) & := \sum_{\substack{p \geq 2 \\ p\mathrm{\ prime}}} 
     \frac{(p-1)}{p^{\beta+1} (p^{\alpha}-1)} \\ 
C_6(\beta) & := \sum_{\substack{p \geq 2 \\ p\mathrm{\ prime}}} 
     \frac{(p-2)}{p(p-1) (p^{\beta+1}-1)} \\ 
C_{7,m}(\alpha) & := -\sum_{\substack{p \geq 2 \\ p\mathrm{\ prime}}} 
     \frac{1}{p^{\alpha+1-m}}. 
\end{align*} 
In the previous equations, $B_n$ is a \emph{Bernoulli numbers} and $E_n$ denotes the 
\emph{Euler numbers}. 
\end{theorem}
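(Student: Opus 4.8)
The plan is to start from the exact finite formula in Theorem~\ref{theorem_main_SOD_result_V2_symmetric form}, namely
\[
\sigma_{\alpha}(n) = n^{\alpha}\Bigl(H_n^{(\alpha+1)} + \widehat{S}_0^{(-\alpha)}(n) + \widehat{S}_1^{(-\alpha)}(n) + \widehat{S}_2^{(-\alpha)}(n)\Bigr),
\]
divide through by $n^{\beta}$, and sum over $n \leq x$. Equivalently, and more conveniently, I would work directly from the alternate exact identity already recorded in Section~\ref{subSection_Apps_Asymptotics},
\[
\Sigma^{(\alpha,\beta)}(x) = \sum_{d=1}^{x} d^{\alpha-\beta}\, H_{\floor{x/d}}^{(\beta)},
\]
but the point of the theorem is to route the estimate through the prime-indexed pieces so that the constants $C_1,\dots,C_{7,m}$ emerge. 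So the first step is to write $\Sigma^{(\alpha,\beta)}(x) = \sum_{n\le x} n^{\alpha-\beta} H_n^{(\alpha+1)} + \sum_{n\le x} n^{\alpha-\beta}\widehat{S}_0^{(-\alpha)}(n) + \sum_{n\le x} n^{\alpha-\beta}\widehat{S}_1^{(-\alpha)}(n) + \sum_{n\le x} n^{\alpha-\beta}\widehat{S}_2^{(-\alpha)}(n)$ and estimate each of the four summands separately.

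For the main term $\sum_{n\le x} n^{\alpha-\beta} H_n^{(\alpha+1)}$, I would use $H_n^{(\alpha+1)} = \zeta(\alpha+1) - O(n^{-\alpha})$ (valid since $\alpha>1$), so this contributes $\zeta(\alpha+1)\sum_{n\le x} n^{\alpha-\beta} + O(\sum_{n\le x} n^{-\beta})$; the first sum is handled by Faulhaber's formula (quoted in the excerpt) to produce the $\frac{\zeta(\alpha+1)}{\alpha+1-\beta}x^{\alpha+1-\beta}$ leading term together with the Bernoulli-number polynomial corrections $\binom{\alpha+1-\beta}{j}B_j x^{\alpha+1-\beta-j}/(\alpha+1-\beta)$, and the error term is $O(1)$ since $\beta\ge 2$, absorbed into $O(x/\log x)$. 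For $\widehat{S}_1^{(-\alpha)}$ and $\widehat{S}_2^{(-\alpha)}$, I would substitute their explicit formulas from Theorem~\ref{theorem_main_SOD_result_V2_symmetric form}(ii)--(iii), interchange the order of summation to bring the sum over $n$ inside (past the sums over primes $p$ and exponents $k$), and again replace each $H_{\floor{n/p^k}}^{(\alpha+1)}$ by $\zeta(\alpha+1)$ with a controlled error. The geometric-type sums over $k$ and over $p$ then telescope/converge to the constants $C_2,C_3,C_5,C_6,C_7$ (the $(p-1)/p^{\alpha k+1}$ factors summed over $k\ge 1$ give the $(p-1)/(p^{\alpha}-1)$ denominators; the boundary term $k=\nu_p(n)+1$ produces the $C_{7,m}$ pieces and, combined with the $(p-2)$ numerators coming from counting $n\le x$ divisible by the relevant prime power, the $C_1, C_3, C_6$ shapes), while the residual powers of $n$ left over after the interchange get summed by Faulhaber again to yield the $x^{\alpha+1-\beta-j}$ and $x^j$ families; the Euler numbers $E_k$ and the factor $(-1)^n/2^{\alpha+1}$ in $\widehat{S}_2$ account for $C_4, C_5$ and the $E_j$ sums. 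For $\widehat{S}_0^{(-\alpha)}$, I would use the Ramanujan-sum representation from Proposition~\ref{remark_RamanujansSum} together with the bound $|c_d(n)|\le (d,n)$ noted in the remark; this shows the contribution of $\widehat{S}_0$ is of strictly lower order and folds into the error term $O(x/\log x)$ (the $1/\log x$ saving, rather than merely $O(x)$, comes from the fact that the surviving $d$ in $\tau$ are those that are \emph{not} of the form $p^k$ or $2p^k$, so Mertens-type estimates on the density of such $d$ give the extra logarithmic factor).

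The main obstacle I anticipate is the bookkeeping in the interchange-of-summation step for $\widehat{S}_1^{(-\alpha)}$ and $\widehat{S}_2^{(-\alpha)}$: one must carefully split the inner index $k$ into the ``clean'' range $1\le k\le \nu_p(n)$ (where $n/p^k\in\mathbb{Z}$ so $H_{\floor{n/p^k}}^{(\alpha+1)} = H_{n/p^k}^{(\alpha+1)}$) versus the boundary index $k=\nu_p(n)+1$, then regroup the double sum $\sum_{n\le x}\sum_{p,k}$ as $\sum_{p}\sum_k \sum_{n\le x,\ p^k\mid n}(\cdots)$, track exactly which arithmetic progressions of $n$ contribute, and verify that after replacing $H^{(\alpha+1)}$ by $\zeta(\alpha+1)$ the remaining $n$-sums reproduce precisely the stated combinations of Bernoulli/Euler polynomials. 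Convergence of the prime series defining $C_1,\dots,C_{7,m}$ must be checked (each is dominated by $\sum_p p^{-(\alpha+1)}$ or better, which converges for $\alpha>1$, and this is exactly why the hypothesis $\alpha>1$ rather than $\alpha>0$ appears), and the condition $2\le\beta\le\alpha$ is what guarantees the Faulhaber sums have the advertised polynomial shape with nonnegative top exponent and that the tail error $\sum n^{-\beta}$ is summable. Once these pieces are aligned, assembling them gives exactly the displayed expansion.
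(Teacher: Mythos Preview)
Your decomposition into four pieces and your treatment of the leading harmonic-number term and of $\widehat{S}_1^{(-\alpha)}$, $\widehat{S}_2^{(-\alpha)}$ are essentially the paper's approach (the paper carries out the interchange via Abel summation against explicitly computed summatory functions such as $A_{1,p}(t)=\sum_{i\le t}\Iverson{p\mid i}p^{-\alpha\nu_p(i)}$, which is where the $(p-2)/\bigl(p(p-1)(p^{\alpha+1}-1)\bigr)$ shape actually originates, but your sketch is compatible with that).

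The genuine gap is in your handling of $\widehat{S}_0^{(-\alpha)}$. You assert that this piece is absorbed entirely into $O(x/\log x)$, arguing that the $d$ in $\tau_{-\alpha}$ are those \emph{not} of the form $p^k$ or $2p^k$ and that Mertens-type density estimates then supply a logarithmic saving. This is backwards: the complement of the prime-power set has density essentially $1$, so restricting to it gives no saving at all. A crude bound via $|c_d(n)|\le(d,n)$ only yields $\sum_{n\le x}n^{\alpha-\beta}\widehat{S}_0^{(-\alpha)}(n)=O(x^{\alpha+1-\beta})$, which is the \emph{same} order as the main term. In fact $\widehat{S}_0^{(-\alpha)}$ is precisely where the constants $C_6(\beta)$ and $C_{7,m}(\alpha)$ come from in the paper's argument: one writes $\chi_{\PP}(d)=1-\Iverson{d=p^k\text{ or }2p^k}$, shows (using Lemma~\ref{lemma_DivSumIdents}, the evaluation of Ramanujan sums at prime powers, and an Erd\H{o}s estimate on $\#\{r\le n:\omega(r)=k\}$) that the unrestricted sum over all $d$ is genuinely small, and then the subtracted prime-power part is treated by another Abel-summation computation that produces the $C_6$ and $C_{7,j}$ contributions to the main and secondary terms. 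Your proposal misattributes $C_6$ and $C_{7,m}$ to the $\widehat{S}_1$/$\widehat{S}_2$ pieces; in the paper, parts (II) and (III) yield only $C_1,C_{2,j}$ and $C_3,C_{4,j},C_5$ respectively, and without a correct analysis of $\widehat{S}_0$ the stated expansion cannot be recovered.
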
 
Due to the length of estimating some of the intricate nested sums from 
the formulas in Theorem \ref{theorem_main_SOD_result_V2_symmetric form}, 
we delay a complete proof of 
Theorem \ref{theorem_SigmaAlphaBeta_SumFunc_Asymptotics} 
to the last appendix section on page \pageref{page_AppendixA_Proofs}. 
Compared to the classical result related to these sums cited above, the error 
terms are of notably small order. This shows how accurate the new exact formulas 
can be in obtaining new asymptotic estimates. The expense of the small error 
terms is that the involved main terms have expanded in number and complexity. 

\subsection{A short application to perfect numbers} 
\label{subSection_PerfectNumbers} 

We turn our attention to an immediate application of our new results 
which is perhaps one of the most famous unresolved problems in number theory: 
that of determining the form of the perfect numbers. 
A \emph{perfect number} $p$ is a positive integer such that $\sigma(p) = 2p$. 
The first few perfect numbers are given by the sequence 
$\{ 6, 28, 496, 8128, 33550336, \ldots\}$. 
It currently is not known whether there are infinitely-many perfect numbers, or 
whether there exist odd perfect numbers. 
References to work on the distribution of the perfect number counting function, 
$V(x) := \#\{n\text{ perfect}: n \leq x\}$ are found in \citep[\S 2.7]{PRIMEREC}. 
Since we now have a fairly simple 
exact formula for the sum-of-divisors function, $\sigma(n)$, we briefly attempt to 
formulate conditions for an integer to be perfect within the scope of this article. 

It is well known that given a \emph{Mersenne prime} of the form $q = 2^p-1$ for 
some prime $p$, then we have corresponding perfect number of the form
$P = 2^{p-1}(2^p-1)$ 
\citep[\S 2.7]{PRIMEREC} \citep{LUSTIG-ALGINDEP-SUMDIVFNS}. 
We suppose that the positive integer $P$ has the form 
$P = 2^{p-1}(2^p-1)$ for some (prime) integer $p \geq 2$, and 
consider the expansion of the 
sum-of-divisors function on this input to our new exact formulas. 
Suppose that $R := 2^p-1 = r_1^{\gamma_1} r_2^{\gamma_2} \cdots r_k^{\gamma_k}$ 
is the prime factorization of this factor $R$ of $P$ where $\gcd(2, r_i) = 1$ 
for all $1 \leq i \leq k$ and that $R_s := R / s^{\nu_s(R)}$. 
Then by the formulas derived in 
Proposition \ref{prop_ComponentSum_SeriesCoeffs} we have by 
Theorem \ref{theorem_ExactFormulas_v2} that 
\begin{align*} 
\sigma(P) & = \frac{(p+1)}{2} P + \frac{P}{R} \left\lfloor \frac{R}{2} \right\rfloor 
     \left(2 \left\lfloor \frac{R}{2} \right\rfloor - 
     2 \left\lfloor \frac{R-1}{2} \right\rfloor - 1\right) + 
     \tau_1(P) \\ 
     & \phantom{=\frac{(p+1)}{2} P\ } + 
     \sum_{\substack{3 \leq s \leq P \\ s \text{ prime}}} 
     \frac{3}{2} 
     \frac{(s-1)}{s} P \cdot \nu_s(R) \\ 
     & \phantom{= \frac{(p-1)}{4} P\ } + 
     \sum_{\substack{3 \leq s \leq P \\ s \text{ prime}}} 
     s^{\nu_s(R)} \left(\left\lfloor \frac{2^{p-1} R_s}{s} \right\rfloor + 
     \left\lfloor \frac{2^{p-2}R_s}{s} \right\rfloor\right) 
     \times \\ 
     & \phantom{= \frac{(p-1)}{4} P\sum\sum\sum\ } \times 
     \left(s\left\lfloor \frac{2^{p-1} R_s}{s} \right\rfloor - 
     s \left\lfloor \frac{2^{p-1} R_s-1}{s} \right\rfloor -1\right). 
\end{align*} 
If we set $\sigma(P) = 2P$, i.e., construct ourselves a perfect number $P$ by 
assumption to work with, and then 
finally solve for the linear equation in $P$ from the last equation, 
we obtain that $P$ is perfect implies that the following 
condition holds: 
\begin{align} 
\label{eqn_PPerfectCond_v2} 
P & = -\frac{\tau_1(P) + 
     \sum\limits_{\substack{3 \leq s \leq P \\ s \text{ prime}}} 
     s^{\nu_s(R)} \left(\left\lfloor \frac{2^{p-1} R_s}{s} \right\rfloor + 
     \left\lfloor \frac{2^{p-2}R_s}{s} \right\rfloor\right) 
     \left(s\left\lfloor \frac{2^{p-1} R_s}{s} \right\rfloor - 
     s \left\lfloor \frac{2^{p-1} R_s-1}{s} \right\rfloor -1\right)}{ 
     \frac{(p-3)}{2} + \frac{1}{R} \left\lfloor \frac{R}{2} \right\rfloor 
     \left(2 \left\lfloor \frac{R}{2} \right\rfloor - 
     2 \left\lfloor \frac{R-1}{2} \right\rfloor - 1\right) + 
     \sum\limits_{\substack{3 \leq s \leq P \\ s \text{ prime}}} 
     \frac{3(s-1)}{2s} \cdot \nu_s(R)}. 
\end{align} 

\subsection{Corollaries and other identities for the generalized sum-of-divisors functions} 

\begin{cor}
We have the following two new 
noteworthy identities for $\sigma_{\alpha}(x)$ 
defined in terms of the famous Ramanujan sums and special multiplicative functions: 
\begin{align*} 
\sigma_{\alpha}(x) & = \sum_{d=1}^{x} H_{\floor{\frac{x}{d}}}^{(1-\alpha)} \cdot d^{\alpha-1} \cdot 
     c_d(x) \\ 
     & = \sum_{d=1}^{x} H_{\floor{\frac{x}{d}}}^{(1-\alpha)} \cdot d^{\alpha-1} \cdot 
     \mu\left(\frac{d}{(d, x)}\right) 
     \frac{\varphi(d)}{\varphi\left(\frac{d}{(d,x)}\right)}. 
\end{align*} 
\end{cor}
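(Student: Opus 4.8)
The plan is to obtain the two displayed identities directly from the Lambert series expansion underlying Theorem \ref{theorem_ExactFormulas_v2}, rather than from the already-decomposed component sums $\widehat{S}_i^{(\alpha)}$. The key observation is that the first identity is exactly Theorem \ref{theorem_main_SOD_result} \emph{before} the divisors $d$ of the special form $p^k$ or $2p^k$ are peeled off into $\widehat{S}_1^{(\alpha)}$ and $\widehat{S}_2^{(\alpha)}$: the isolated term $H_x^{(1-\alpha)}$ is the $d=1$ summand (since $c_1(x)=1$), $\widehat{S}_1^{(\alpha)}(x)$ gathers the $d=p^k$ summands, and $\widehat{S}_2^{(\alpha)}(x)$ the $d=2p^k$ summands with $p$ odd. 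So the approach is to rerun the computation of Proposition \ref{remark_RamanujansSum} with the restriction $d\neq p^k,2p^k$ removed.

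First I would record the unrestricted term expansion. The $n=1$ case of Lemma \ref{lemma_PhiTildend_MudOverTerm_DivSum_exp} gives $\widetilde{\Phi}_1(q)=1/(1-q)$, so combining with \eqref{eqn_Intro_LambertSeriesTerm_exps_v1} one has, for every $n\ge 1$,
\[
\frac{q^n}{1-q^n} = -1 + \frac{1}{n}\sum_{d|n}\widetilde{\Phi}_d(q) = -1 + \frac{1}{n}\sum_{d|n}\sum_{r|d}\frac{r\,\mu(d/r)}{1-q^r},
\]
the second equality again by Lemma \ref{lemma_PhiTildend_MudOverTerm_DivSum_exp}. Multiplying by $n^{\alpha}$, summing over $1\le n\le x$, and extracting $[q^x]$ from the partial sum of the Lambert series in \eqref{eqn_LambertSeriesOGF_def}, the constant $-1$ contributes only to $[q^0]$ and drops for $x\ge 1$, while $[q^x](1-q^r)^{-1}=\Iverson{r\mid x}$; hence
\[
\sigma_{\alpha}(x) = \sum_{n=1}^{x} n^{\alpha-1}\sum_{d|n}\sum_{r\,|\,(d,x)} r\,\mu(d/r) = \sum_{n=1}^{x} n^{\alpha-1}\sum_{d|n} c_d(x),
\]
by the Ramanujan-sum identity $c_q(n)=\sum_{e\,|\,(q,n)}e\,\mu(q/e)$ recalled above. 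As a consistency check, $\sum_{d|n}c_d(x)=n\,\Iverson{n\mid x}$, which recovers $\sigma_{\alpha}(x)=\sum_{n|x}n^{\alpha}$.

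Next I would carry out the Dirichlet-type rearrangement. Writing $n=dm$ and factoring the weight as $n^{\alpha-1}=d^{\alpha-1}m^{\alpha-1}$, the interchange of summation already used in the proof of Proposition \ref{remark_RamanujansSum} — namely $\sum_{k\le x}\sum_{d|k}f(d)g(k/d)=\sum_{d\le x}f(d)\sum_{k\le\floor{x/d}}g(k)$ with $f(d)=d^{\alpha-1}c_d(x)$ and $g(m)=m^{\alpha-1}$ — yields
\[
\sigma_{\alpha}(x) = \sum_{d=1}^{x} d^{\alpha-1}\,c_d(x)\sum_{m=1}^{\floor{x/d}} m^{\alpha-1} = \sum_{d=1}^{x} H_{\floor{x/d}}^{(1-\alpha)}\cdot d^{\alpha-1}\cdot c_d(x),
\]
which is the first asserted identity; note $H_0^{(1-\alpha)}=0$, so the upper limit $x$ is harmless. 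The second identity then follows immediately upon substituting H\"older's evaluation $c_d(x)=\mu\!\left(d/(d,x)\right)\varphi(d)/\varphi\!\left(d/(d,x)\right)$, already recorded in Proposition \ref{remark_RamanujansSum}.

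I do not anticipate a real obstacle: the argument is a streamlined repetition of the proofs of Theorem \ref{theorem_ExactFormulas_v2} and Proposition \ref{remark_RamanujansSum} with the $d\neq p^k,2p^k$ restriction lifted. The points needing care are (i) checking that lifting the restriction introduces no double counting — equivalently, that the $d=1$ summand reproduces exactly the term $H_x^{(1-\alpha)}$ of Theorem \ref{theorem_main_SOD_result} and nothing more — and (ii) the term-by-term extraction of $[q^x]$ from the finite partial sum, which is routine since each $q^n/(1-q^n)$ converges for $|q|<1$ and $n\le x$. A more laborious alternative would instead start from Theorem \ref{theorem_main_SOD_result} and verify directly that $\widehat{S}_1^{(\alpha)}(x)=\sum_{d=p^k\le x}H_{\floor{x/d}}^{(1-\alpha)}d^{\alpha-1}c_d(x)$ and $\widehat{S}_2^{(\alpha)}(x)=\sum_{d=2p^k\le x,\,p\text{ odd}}H_{\floor{x/d}}^{(1-\alpha)}d^{\alpha-1}c_d(x)$, using the closed forms of $c_{p^k}(x)$ and $c_{2p^k}(x)$ according to $\nu_p(x)$ together with the multiplicativity $c_{2p^k}(x)=c_2(x)c_{p^k}(x)$ and $c_2(x)=(-1)^x$; the direct route above sidesteps this case analysis entirely.
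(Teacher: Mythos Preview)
Your proposal is correct and follows essentially the same route as the paper's own proof: both expand $q^n/(1-q^n)$ via Lemma \ref{lemma_PhiTildend_MudOverTerm_DivSum_exp} into the double divisor sum $\sum_{d|n}\sum_{r|d} r\mu(d/r)/(1-q^r)$, extract the coefficient $[q^x]$ to pick up the condition $r\mid x$, perform the Dirichlet-type interchange $\sum_{n\le x}\sum_{d|n}\mapsto\sum_{d\le x}\sum_{m\le\floor{x/d}}$, and then recognize the Ramanujan sum and invoke H\"older's evaluation. The only cosmetic difference is that the paper interchanges the $n$ and $d$ sums before extracting $[q^x]$ whereas you extract first and interchange afterward; your added consistency check via $\sum_{d|n}c_d(x)=n\,\Iverson{n\mid x}$ is a nice touch not present in the paper.
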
 
\begin{proof} 
We can easily derive the 
following consequences from a modification of the 
result in \eqref{eqn_Intro_LambertSeriesTerm_exps_v1} from the introduction when 
$x \geq 1$ and for any $\alpha \in \mathbb{C}$: 
\begin{align*} 
\sigma_{\alpha}(x) & = [q^x]\left(\sum_{n=1}^x \sum_{d|n} \frac{1}{q} \cdot \frac{d}{dw}\Bigl[ 
     \log \Phi_d(w) \Bigr] \Bigr\rvert_{w=1/q} \times n^{\alpha-1}\right) \\ 
     & = [q^x]\left(\sum_{n=1}^x \sum_{d|n} \sum_{r|d} \frac{r \mu(d/r)}{1-q^r} \times n^{\alpha-1} 
     \right) \\ 
     & = [q^x]\left(\sum_{d=1}^x H_{\floor{\frac{x}{d}}}^{(1-\alpha)} \cdot d^{\alpha-1} \times 
     \sum_{r|d} \frac{r \mu(d/r)}{1-q^r}\right) \\ 
     & = \sum_{d=1}^x H_{\floor{\frac{x}{d}}}^{(1-\alpha)} \cdot d^{\alpha-1} \left( 
     \sum_{r|(d,x)} r \mu(d/r)\right). 
\end{align*} 
Here, we are explicitly employing the well-known identity expanding the cyclotomic polynomials, 
$\Phi_n(q)$, cited in \eqref{eqn_CyclotomicFactorization_stmt_v1} 
to obtain our second main characterization of the 
sum-of-divisor sums. Specifically, forming the logarithmic derivatives of the divisor product 
forms of $\Phi_n(q)$ implied by this identity allows us to relate each divisor function, 
$\sigma_{\alpha}(x)$, to the sums over harmonic numbers of generalized orders and to the 
Ramanujan sums, $c_d(x)$, defined in the previous remark given in this section. 
These steps then naturally lead us to the claimed identities stated above. 
\end{proof} 

\begin{remark}[Conjectures at related divisor sum identities] 
Another identity for the generalized sum-of-divisors functions which we have obtained based on 
computational experiments involving the last step in the previous derivation steps, and which 
we only conjecture here, is given by 
\begin{equation} 
\label{eqn_conj_identity_for_sod} 
\sigma_{\alpha+1}(x) = \sum_{d|x} d^{\alpha+1} \left(\sum_{k=1}^{\floor{\frac{x}{d}}} 
     \mu(k) k^{\alpha} H_{\floor{\frac{x/d}{k}}}^{(-\alpha)}\right). 
\end{equation} 
Since the forward differences of the harmonic numbers at ratios of floored arguments 
with respect to $k$ are easily shown to satisfy the relation 
\[
\Delta\left[H_{\floor{\frac{x+1}{k}}}^{(-\alpha)}\right](k) - 
\Delta\left[H_{\floor{\frac{x}{k}}}^{(-\alpha)}\right](k) = 
     -\left(\frac{x+1}{k}\right)^{\alpha} \Iverson{k|x+1} + 
     \left(\frac{x+1}{k+1}\right)^{\alpha} \Iverson{k+1|x+1}, 
\]
we are able to relate the $\alpha$-weighted cases of the Mertens summatory functions 
\[
M_{\alpha}(x) = \sum_{n \leq x} \frac{\mu(n)}{n^{\alpha}}, 
\]
defined in the earlier remarks connecting Ramanujan sums to our 
primary new results in this article by applying partial summation. 
\end{remark} 

\begin{table}[ht!]

\begin{center} 
\[
\boxed{
\begin{array}{l|l} \hline 
n & \PL_{\alpha}(n) \\ \hline
0 & = 1 \\ 
1 & = 1 \\ 
2 & = 1 + 2^{\alpha} \\ 
3 & = 1 + 2^{\alpha} + 3^{\alpha} \\ 
4 & = 1 + 3^{\alpha} + \frac{3 \cdot 2^{\alpha}}{2}\left(1+2^{\alpha}\right) \\ 
5 & = 1 + 3^{\alpha} + 5^{\alpha} + 6^{\alpha} + 
     \frac{3 \cdot 2^{\alpha}}{2}\left(1+2^{\alpha}\right) \\ 
6 & = 1 + 5^{\alpha} + 2^{\alpha+1}\left(2^{\alpha} + 3^{\alpha}\right) + 
     \frac{3^{\alpha}}{2}\left(3 + 3^{\alpha}\right) + 
     \frac{2^{\alpha}}{6}\left(11 + 7 \cdot 4^{\alpha}\right). \\ \hline
\end{array}
}
\]
\end{center}

\caption{\textbf{Generalized Planar Partitions.} 
         The first few values of the generalized planar 
         partition functions for symbolic $\alpha \in \mathbb{Z}^{+}$.}
\label{table_GenPlanarPartitions}

\end{table}  

\subsubsection{Example: Generalized forms of planar partitions and exponentials of the 
               sum-of-divisors generating functions} 

We have some additional new consequences of the main theorems in this article via their 
relation to logarithmic derivatives of special polynomials.
In particular, we can restate Theorem \ref{theorem_ExactFormulas_v2} 
in terms of logarithmic derivatives of products of the cyclotomic polynomials via 
\eqref{eqn_CyclotomicFactorization_stmt_v1}, \eqref{eqn_KeyCyclotomicPolyProdIdent_ForLLambdaFn}, and 
Definition \ref{def_NotationProductsLogDer} 
in the following forms for $x \geq 0$ and any (say) integers $\alpha \geq 0$:
\begin{align} 
\label{eqn_DivisorSigmaAlpha_Idents-ThmReStmt_v1} 
\frac{\sigma_{\alpha+1}(x)}{x} & = 
     [q^x]\left(\sum_{n=1}^x \log\left(\frac{1}{q^n-1}\right) n^{\alpha}\right)  = 
     [q^x]\log\left(\frac{1}{\prod\limits_{1 \leq n \leq x} (1-q^n)^{n^{\alpha}}}\right). 
\end{align} 
The expansions of the sum-of-divisors functions 
given in \eqref{eqn_DivisorSigmaAlpha_Idents-ThmReStmt_v1} above then 
follow from writing \eqref{eqn_Intro_LambertSeriesTerm_exps_v1} in the form 
\begin{equation*} 
\frac{nq}{1-q^n} = nq - q \sum_{d|n} \widetilde{\Phi}_d\left(q^{-1}\right), 
\end{equation*} 
and then applying the cited identities to the definition of the modified 
logarithmic derivative variants of the cyclotomic polynomials defined in the 
introduction. 

We enumerate the generalized planar partitions $\PL_{\alpha}(n)$ 
for non-negative integers $\alpha$ and all $n \geq 0$ by the following class of 
parameterized (in $\alpha$) generating functions 
(\cf \seqnum{A000219}, \seqnum{A000991}, \seqnum{A001452}, \seqnum{A002799}, 
\seqnum{A023871}--\seqnum{A023878}, \seqnum{A144048} and 
\seqnum{A225196}--\seqnum{A225199}): 
\begin{align*} 
\PL_{\alpha}(n) & := [q^n]\left(\prod_{n \geq 1} \frac{1}{(1-q^n)^{n^{\alpha}}}\right) \\ 
     & \phantom{:} = [q^n]\exp\left(\sum_{m \geq 1} \frac{\sigma_{\alpha+1}(m) q^m}{m}\right). 
\end{align*} 
The details of the generating-function-based argument based on an 
extension of the logarithmic derivatives implicit to the expansions in 
\eqref{eqn_Intro_LambertSeriesTerm_exps_v1} allow us to effectively generalize the first 
two cases of the convoluted recurrence relations for $p(n)$ and $\PL(n)$ in the next forms. 
\[
n \cdot \PL_{\alpha}(n) = \sum_{k=1}^n \sigma_{\alpha+1}(k) \PL_{\alpha}(n-k),\ n \geq 1. 
\]
Table \ref{table_GenPlanarPartitions} lists computations of the first few special case values of 
these sequences for symbolic $\alpha$. 
We immediately can see that when $\alpha := 0, 1$, these partition functions correspond to the 
\emph{partition function} $p(n)$ and the (ordinary) sequence of \emph{planar partitions} 
$\PL(n)$, respectively (\seqnum{A000041}, \seqnum{A000219}). 
Thus without motivating this class of recursive identities relating generalized 
planar partitions and the sum-of-divisors functions by physical or geometric interpretations to 
counting such partition numbers (for example, as are known in the case of the ordinary 
planar partitions $\PL(n)$), we have managed to relate the special multiplicative divisor functions 
we study in this article to a distinctly more additive flavor of number theoretic functions\footnote{ 
    We remark that 
    such relations between multiplicative number theory and the more additive theories of 
    partitions and special functions are decidedly rare in the supporting literature. 
    See \cite{MERCA-SCHMIDT-AMM,SCHMIDT-LAMBERTSER-MATRIXEQNS} for further examples and 
    references to other works relating these two branches of number theory 
    (i.e., the additive and multiplicative structures of the respective functions involved). 
}.

\section{Comparisons to other exact formulas for partition and divisor functions} 
\label{Section_CompsToOtherExactFormulas} 

\subsection{Exact formulas for the divisor and sums-of-divisor functions} 

\subsubsection{Finite sums and trigonometric series identities} 

There is an infinite series for the ordinary sums-of-divisors function, 
$\sigma(n)$, due to Ramanujan in the form of \citep[\S 9, p.\ 141]{HARDY-RAMANUJAN} 
\begin{align} 
\label{eqn_SigmaFn_InfTrigSumExp_Ident} 
\sigma(n) = \frac{n \pi^2}{6}\left[1 + \frac{(-1)^n}{2^2} + 
     \frac{2\cos\left(\frac{2}{3} n \pi\right)}{3^2} + 
     \frac{2\cos\left(\frac{2}{5} n \pi\right) + 
     2\cos\left(\frac{4}{5} n \pi\right)}{5^2} + \cdots\right] 
\end{align} 
In similar form, we have a corresponding infinite sum providing an exact formula 
for the divisor function expanded in terms of the functions $c_q(n)$ defined in 
\citep[\S 9]{HARDY-RAMANUJAN} of the form (\cf Remark \ref{remark_RamanujansSum}) 
\begin{align} 
\label{eqn_DivFn_InfCqnSumExp_Ident} 
d(n) & = - \sum_{k \geq 1} \frac{c_k(n)}{k} \log(k) = 
     -\frac{c_2(n)}{2} \log 2 - \frac{c_3(n)}{3} \log 3 - \frac{c_4(n)}{4} \log 4 
     - \cdots. 
\end{align} 
Recurrence relations between the generalized sum-of-divisors functions are 
proved in the references 
\citep{SCHMIDT-DIVSIGMA-IDENTS-2017,SCHMIDT-LAMBERTSER-MATRIXEQNS}. 
There are also a number of known convolution sum identities involving the 
sum-of-divisors functions which are derived from their relations to Lambert series 
and Eisenstein series. 

\subsubsection{Exact formulas for sums of the divisor function} 

Exact formulas for the divisor function, $d(n)$, of a much different 
characteristic nature are expanded in the results of 
\citep{CARELLA-EXPLFORMULA-DIVFN}. 
First, we compare our finite sum results with the infinite sums in the 
(weighted) Voronoi formulas for the partial sums over the divisor function 
expanded as 
\begin{align*} 
\frac{x^{\nu-1}}{\Gamma(\nu)} \sum_{n \leq x} \left(1-\frac{n}{x}\right)^{\nu-1} 
     d(n) & = \frac{x^{\nu-1}}{4\Gamma(\nu)} + 
     \frac{x^{\nu} \left(\log x+\gamma-\psi(1+\nu)\right)}{\Gamma(\nu+1)} - 
     2\pi x^{\nu} \sum_{n \geq 1} d(n) F_{\nu}\left(4\pi\sqrt{nx}\right) \\ 
\sum_{n \leq x} d(n) & = \frac{1}{4} + \left(\log x + 2\gamma-1\right) x \\ 
     & \phantom{= \frac{1}{4}\ } - 
     \frac{2\sqrt{x}}{\pi} \sum_{n \geq 1} \frac{d(n)}{\sqrt{n}}\left( 
     K_1\left(4\pi\sqrt{nx}\right) + \frac{\pi}{2} 
     Y_1\left(4\pi\sqrt{nx}\right)\right), 
\end{align*} 
where $F_{\nu}(z)$ is some linear combination of the \emph{Bessel functions}, 
$K_{\nu}(z)$ and $Y_{\nu}(z)$, and $\psi(z)$ is the \emph{digamma function}. 
A third identity for the partial sums, or average order, of the divisor function 
is expanded directly in terms of the Riemann zeta function, $\zeta(s)$, and 
its non-trivial zeros $\rho$ in the next equation. 
\begin{align*} 
\sum_{n \leq x} d(n) & = -\frac{\pi^2}{12} + (\log x+2\gamma-1)x + 
     \frac{\pi^2}{3} \sum_{\substack{\rho: \zeta(\rho) = 0 \\ 
     \rho \neq -2, -4, -6, \ldots}} 
     \frac{\zeta(\rho/2)^2}{\rho \zeta^{\prime}(\rho)} x^{\rho/2} \\ 
     & \phantom{=-\frac{\pi^2}{12}\ } - 
     \frac{\pi^2}{6} \sum_{n \geq 0} \frac{\zeta(-(2n+1)) x^{-(2n+1)}}{ 
     (2n+1) \zeta^{\prime}(-(2n+1))} 
\end{align*} 
While our new exact sum formulas in Theorem \ref{theorem_ExactFormulas_v2} 
are deeply tied to the prime numbers $2 \leq p \leq x$ for any $x$, we 
once again observe that 
the last three infinite sum expansions of the partial sums over the divisor function 
are of a much more distinctive character than our new exact finite sum formulas 
proved by the theorem. 

\subsection{Comparisons with other exact formulas for special functions} 

\subsubsection{Rademacher's formula for the partition function $p(n)$} 

Rademacher's famous exact formula for the partition function $p(n)$ when 
$n \geq 1$ is stated as \citep{SILLS-RADEMACHER} 
\begin{align*} 
p(n) & = \frac{1}{\pi\sqrt{2}} \sum_{k \geq 1} A_k(n) \sqrt{k} \frac{d}{dn}\left[ 
     \frac{\sinh\left(\frac{\pi}{k} \sqrt{\frac{2}{3}\left(n-\frac{1}{24}\right)} 
     \right)}{\sqrt{n-\frac{1}{24}}}\right], 
\end{align*} 
where 
\begin{align*} 
A_k(n) & := \sum_{\substack{0 \leq h < k \\ \gcd(h, k) = 1}} 
     e^{\pi\imath s(h, k)-2\pi\imath nh / k}, 
\end{align*} 
is a \emph{Kloosterman-like sum} and $s(h, k)$ and $\omega(h, k)$ are the 
(exponential) \emph{Dedekind sums} defined by 
\begin{align} 
\label{eqn_Dedekind_sum_functions_shk_omegahk} 
s(h, k) & := \sum_{r=1}^{k-1} \frac{r}{k}\left(\frac{hr}{k} - 
     \left\lfloor \frac{hr}{k} \right\rfloor - \frac{1}{2}\right) \\ 
\notag 
\omega(h, k) & := \exp\left(\pi\imath \cdot s(h, k)\right). 
\end{align} 
In comparison to other somewhat related 
formulas for special functions, we note that 
unlike Rademacher's series for the partition function, $p(n)$, expressed in terms 
of finite Kloosterman-like sums, our expansions require only a sum over 
finitely-many primes $p \leq x$ to evaluate the special function 
$\sigma_{\alpha}(x)$ at $x$. 
For comparison with the previous section, 
we also note that the partition function $p(n)$ is related to the sums-of-divisor 
function $\sigma(n)$ through the convolution identity 
\begin{align*} 
n p(n) & = \sum_{k=1}^n \sigma(n-k) p(k). 
\end{align*} 
There are multiple recurrence relations that can be given for $p(n)$ including the 
following expansions: 
\begin{align*} 
p(n) & = p(n-1) + p(n-2) - p(n-5) - p(n-7) + p(n-12) + p(n-15) - p(n-22) - \cdots \\ 
     & = 
     \sum_{\substack{k=\left\lfloor -(\sqrt{24n+1}+1) / 6 \right\rfloor \\ 
           k \neq 0}}^{\left\lfloor (\sqrt{24n+1}-1) / 6 \right\rfloor}
     (-1)^{k+1} p\left(n-\frac{k(3k+1)}{2}\right). 
\end{align*} 

\subsubsection{Rademacher-type infinite sums for other partition functions} 

\begin{example}[Overpartitions] 
An \emph{overpartition} of an integer $n$ is defined to be a representation of $n$ 
as a sum of positive integers with non-increasing summands such that the 
last instance of a given summand in the overpartition may or may not have an 
overline bar associated with it. 
The total number of overpartitions of $n$, $\bar{p}(n)$, is generated by 
\begin{align*} 
\sum_{n \geq 0} \bar{p}(n) q^n & = \prod_{m \geq 1} \frac{1+q^m}{1-q^m} = 
     1 + 2q + 4q^2+8q^3+14q^4+24q^5+40q^6 + \cdots. 
\end{align*} 
A convergent Rademacher-type infinite series providing an exact formula for the 
partition function $\bar{p}(n)$ is given by 
\begin{align*} 
\bar{p}(n) & = \sum_{\substack{k \geq 1 \\ 2 \nmid k}} 
     \sum_{\substack{0 \leq h < k \\ \gcd(h, k) = 1}} \frac{\sqrt{k}}{2\pi} 
     \frac{\omega(h, k)^2}{\omega(2h, k)} e^{-2\pi\imath nh / k} 
     \frac{d}{dn}\left[\frac{\sinh\left(\frac{\pi\sqrt{n}}{k}\right)}{\sqrt{n}} 
     \right]. 
\end{align*} 
\end{example} 

\begin{example}[Partitions where no odd part is repeated] 
Let the function $p_{\od}(n)$ denote the number of partitions of a non-negative 
integer $n$ where no odd component appears more than once. 
This partition function variant is generated by the infinite product 
\begin{equation*} 
\sum_{n \geq 0} p_{\od}(n) q^n = \prod_{m \geq 1} \frac{1+q^{2m-1}}{1-q^{2m}} = 
     1 + x+x^2+2x^3+3x^4+4x^5+5x^6+7x^7+10x^8 + \cdots. 
\end{equation*} 
We have a known Rademacher-type sum exactly generating $p_{\od}(n)$ for each 
$n \geq 0$ expanded in the form of 
\begin{align*} 
p_{\od}(n) & = \frac{2}{\pi} \sum_{k \geq 1} \sqrt{k\left(1+(-1)^{k+1} + 
     \left\lfloor \frac{\gcd(k, 4)}{4} \right\rfloor\right)} \times \\ 
     & \phantom{=\ } \times 
     \sum_{\substack{0 \leq h < k \\ \gcd(h, k) = 1}} 
     \frac{\omega(h, k) \omega\left(\frac{4h}{\gcd(k, 4)}, \frac{k}{\gcd(k, 4)} 
     \right)}{\omega\left(\frac{2h}{\gcd(k, 2)}, \frac{k}{\gcd(k, 2)}\right)} 
     e^{-2\pi\imath nh / k} \frac{d}{dn}\left[\frac{\sinh\left( 
     \frac{\pi\sqrt{\gcd(k, 4) (8n-1)}}{4k}\right)}{ 
     \sqrt{8n-1}}\right]. 
\end{align*} 
\end{example} 

Each of these three partition function variants are special cases of the 
partition function $p_r(n)$ which is generated by the infinite product 
$\prod_{m \geq 1} (1+x^m) / (1-x^{2^r m})$. Another more general, and more 
complicated Rademacher-like sum for this $r$ partition function is given in 
\citep[\S 2]{SILLS-RADEMACHER}, though we do not restate this result in 
this section. 
As we remarked in the previous section, these infinite expansions are still unlike the 
prime-related exact finite sum formulas we have proved as new results in the 
article. 

\subsubsection{Sierpi\'nski's Voronoi-type formula for the sum-of-squares function} 

The \emph{Gauss circle problem} asks for the count of the number of lattice points, 
denoted $N(R)$, inside a circle of radius $R$ centered at the origin. 
It turns out that we have exact formulas for $N(R)$ expanded as both a sum over the 
sum-of-squares function and in the form of 
\begin{equation*} 
N(R) = 1 + 4 \lfloor R \rfloor + 4 \sum_{i=1}^{\lfloor R \rfloor} 
     \left\lfloor \sqrt{R^2-i^2} \right\rfloor. 
\end{equation*} 
The Gauss circle problem is closely-related to the Dirichlet divisor problem 
concerning sums over the divisor function, $d(n)$ 
\citep[\S 1.4]{CARELLA-EXPLFORMULA-DIVFN} \cite{ONCIRCLE-AND-DIVISOR}. 
It so happens that the sum-of-squares function, $r_2(n)$, satisfies the next 
Sierpi\'nski formula which is similar and of Voronoi type to the first two identities 
for the divisor function given in the previous subsection. However, we note that the 
Voronoi-type formulas in the previous section are expanded in terms of certain 
Bessel functions, where our identity here requires the alternate special function 
\begin{equation*} 
J_{\nu}(z) = \sum_{n \geq 0} \frac{(-1)^n}{n! \cdot \Gamma(\nu+n+1)} 
     \left(\frac{z}{2}\right)^{\nu+2n}, 
\end{equation*} 
which is an unconditionally convergent series for all $0 < |z| < \infty$ and any 
selection of the parameter $\nu \in \mathbb{C}$. 
\begin{align*} 
\sum_{n \leq x} r_2(n) & = \pi x + \sqrt{x} \sum_{n \geq 1} 
     \frac{r_2(n)}{\sqrt{n}} J_1\left(2\pi \sqrt{nx}\right) 
\end{align*} 
As in the cases of the weighted Voronoi formulas involving the divisor function 
stated in the last section, we notice that this known exact formula for the 
special function $r_2(n)$ has a much different nature to its expansion than our 
new exact finite sum formulas for $\sigma_{\alpha}(n)$. 
We do, however, have an analog to the infinite series for the classical divisor 
functions in \eqref{eqn_SigmaFn_InfTrigSumExp_Ident} and 
\eqref{eqn_DivFn_InfCqnSumExp_Ident} which is expanded in terms of 
Ramanujan's sum functions 
\begin{equation*} 
c_q(n) = \sum_{\substack{\ell m = q \\ m|n}} \mu(\ell) m, 
\end{equation*} 
as follows \citep[\S 9]{HARDY-RAMANUJAN}: 
\begin{align*} 
r_2(n) & = \pi\left(c_1(n) - \frac{c_3(n)}{3} + \frac{c_5(n)}{5} - \cdots\right). 
\end{align*} 

\section{Conclusions} 

\subsection{Summary} 

In this article, we again began by considering the building blocks of the 
Lambert series generating functions for the sum-of-divisors functions in 
\eqref{eqn_LambertSeriesOGF_def}. 
The new exact formulas for these special arithmetic functions are 
obtained in this case by our observation of the expansions of the series terms, 
$q^n / (1-q^n)$, by cyclotomic polynomials and their logarithmic derivatives. 
We note that in general it is hard to evaluate the series coefficients of 
$\widetilde{\Phi}_n(q)$ without forming the divisor sum employed in the proof of 
part (i) of Proposition \ref{prop_ComponentSum_SeriesCoeffs}. 
We employed the established, or at least easy to derive, key formulas 
for the logarithmic derivatives of the cyclotomic polynomials 
along with known formulas for reducing cyclotomic polynomials of the form 
$\Phi_{p^r m}(q)$ when $p \nmid m$ to establish the Lambert series term 
expansions in \eqref{eqn_Intro_LambertSeriesTerm_exps_v1}. 
The expansions of our new exact formulas for the generalized sum-of-divisors 
functions are deeply related to the prime numbers and the distribution of the 
primes $2 \leq p \leq x$ for any $x \geq 2$ through this construction. 

One of the other interesting results cited in the introduction provides new 
relations between the $r$-order harmonic numbers, $H_n^{(r)}$, when $r > 0$ and the 
Bernoulli polynomials when $r \leq 0$ in the form of 
\eqref{eqn_SigmaAlphax_Negative-Order_Symmetry_Ident}. 
We applied these new exact functions for $\sigma_{\alpha}(n)$, $\sigma(n)$, and 
$d(n)$ to formulate asymptotic formulas for the partial sums of the divisor 
function, or its \emph{average order}, which match more famous known asymptotic 
formulas for these sums. The primary application of the new exact formulas for 
$\sigma(n)$ provided us with a new necessary and sufficient condition 
characterizing the perfectness of a positive integer $n$ 
in Section \ref{subSection_PerfectNumbers}. 
Finally, in Section \ref{Section_CompsToOtherExactFormulas} we compared the new 
results proved within this article to other known exact formulas for the 
sum-of-divisors functions, partition functions, and other special arithmetic 
sequences. 

\subsection{Extensions of these techniques to other special Lambert series expansions} 

Lambert series are special in form because they encode as their 
series coefficients key number theoretic functions which are otherwise inaccessible by 
standard combinatorial generating function techniques. 
More generally, for any well behaved arithmetic function $f$, we can form its Lambert 
series generating function as 
\[
L_f(q) := \sum_{n \geq 1} \frac{f(n) q^n}{1-q^n} = \sum_{m \geq 1} \left( 
     \sum_{d|m} f(d)\right) q^m. 
\]
Then it is easy to extend our proofs from 
Section \ref{Section_Proofs} to show that 
\begin{align*} 
\sum_{d|x} f(d) = \sum_{k=1}^x \frac{f(k)}{k} & + 
     \sum_{p \leq x} \sum_{k=1}^{\nu_p(x)+1} \sum_{r=1}^{\floor{\frac{x}{p^k}}} 
     \frac{f(p^k \cdot r)}{r} \\ 
     & + 
     \sum_{p \leq x} \sum_{k=1}^{\nu_p(x)+1} \sum_{r=1}^{\floor{\frac{x}{2p^k}}} 
     (-1)^{\floor{\frac{2p}{p^k}}} \frac{f((2p)^k \cdot r)}{r} \\ 
     & + \sum_{d=1}^x \frac{\chi_{\PP}(d)}{d \cdot n} c_d(x) \times 
     \sum_{n=1}^{\floor{\frac{x}{d}}} f(d \cdot n). 
\end{align*} 

\subsection{Supplementary computational data for readers and reviewers} 

A \emph{Mathematica} notebook containing definitions that can be used to 
computationally verify the formulas proved in this manuscript is available 
online at the following link: \\ 
\href{https://drive.google.com/open?id=13GYUxEn5RXes6xgEPtL-BvgJA76TF4Jh}{https://drive.google.com/open?id=13GYUxEn5RXes6xgEPtL-BvgJA76TF4Jh}. \\
The functional definitions provided in this notebook are also of use to 
readers for experimental mathematics based on the contents of this article. 
In particular, this data is available to those readers who wish to extend the 
results presented here or for conjecturing new properties related to the 
generalized sum-of-divisors functions based on empirical results obtained 
with the code freely available in this supplementary reference.

\newpage 

\setcounter{section}{0} 
\renewcommand{\thesection}{\Alph{section}} 
\label{page_AppendixA_Proofs} 

\section{Appendix: The complete proof of Theorem \ref{theorem_SigmaAlphaBeta_SumFunc_Asymptotics}} 

\begin{lemma}
\label{lemma_DivSumIdents} 
For any arithmetic functions $f,g,h: \mathbb{N} \rightarrow \mathbb{C}$, and natural numbers 
$x \geq 1$, we have the following pair of divisor sum simplification identities: 
\begin{align*} 
\sum_{n=1}^x f(n) \sum_{d|n} g(d) h\left(\frac{n}{d}\right) & = 
     \sum_{d \leq x} g(d) \sum_{n=1}^{\floor{\frac{x}{d}}} 
     h(n) f(dn) \\ 
\sum_{d=1}^x f(d) \left(\sum_{r|(d,x)} g(r) h\left(\frac{d}{r}\right)\right) & = 
     \sum_{r|x} g(r) \left(\sum_{1 \leq d \leq x/r} h(d) f(rd)\right). 
\end{align*} 
\end{lemma}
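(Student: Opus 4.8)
The plan is to prove both identities by the same elementary device: rewrite each nested divisor sum as a single sum over ordered pairs of positive integers, and then swap the order of summation. Since every sum appearing here is finite, no convergence issues arise and each rearrangement is unconditionally valid; the work is purely in bookkeeping the index sets.

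For the first identity, I would parametrize the divisors of $n$ by writing $n = dm$ with $d,m \geq 1$. The double sum on the left then runs over all pairs $(d,m)$ with $dm \leq x$, with summand $g(d)\,h(m)\,f(dm)$. Reversing the order so that the outer index is $d$, the inner constraint $dm \leq x$ becomes $m \leq x/d$, i.e. $1 \leq m \leq \floor{x/d}$ since $m$ is an integer, which is exactly the right-hand side $\sum_{d \leq x} g(d) \sum_{n=1}^{\floor{x/d}} h(n)\,f(dn)$. The one point to note is that here $d$ ranges over all integers up to $x$ and need not divide $x$, so a floor is genuinely required.

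For the second identity, the extra ingredient is unwinding the condition $r \mid (d,x)$ into the conjunction $r \mid d$ and $r \mid x$. Writing $d = rs$ with $s \geq 1$, the left-hand side becomes a sum over pairs $(r,s)$ with $r \mid x$ and $rs \leq x$, with summand $g(r)\,h(s)\,f(rs)$; and since $r$ now divides $x$, the bound $s \leq x/r$ is an exact integer bound with no floor needed. Grouping by the divisor $r$ of $x$ and summing over $s$ first then yields the claimed form $\sum_{r \mid x} g(r) \sum_{1 \leq d \leq x/r} h(d)\,f(rd)$.

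There is essentially no obstacle in this argument — it is pure reindexing — and the one subtlety worth flagging explicitly is precisely the asymmetry just described: a floor $\floor{x/d}$ in the first identity versus the exact quotient $x/r$ in the second, stemming from whether the outer summation index is free or is forced to divide $x$ by the gcd condition. I would state this distinction clearly so that the proof of Theorem \ref{theorem_SigmaAlphaBeta_SumFunc_Asymptotics} can invoke whichever version it needs without re-deriving it.
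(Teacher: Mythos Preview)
Your argument is correct: both identities are immediate consequences of the standard Dirichlet-hyperbola reindexing, writing $n=dm$ (respectively $d=rs$) and interchanging the order of a finite double sum, and your observation about the floor in the first identity versus the exact quotient in the second is exactly the right distinction. The paper itself omits the proof entirely, stating that the identities are ``fairly standard exercises,'' so your write-up is precisely the kind of routine verification the author had in mind.
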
 

Since the proofs of these identities are not difficult, and indeed are fairly standard 
exercises, we will not prove the two formulas in Lemma \ref{lemma_DivSumIdents}. 

\begin{proof}[Proof of Theorem \ref{theorem_SigmaAlphaBeta_SumFunc_Asymptotics}]
We break down the proof into four separate tasks of estimating the component 
term sums from Theorem \ref{theorem_main_SOD_result_V2_symmetric form}. 
The sum of the dominant and error terms resulting from each of these 
cases then constitutes the proof of the result at hand. In particular, we will 
define and asymptotically analyze the formulas for the following key sums: 
\begin{align*} 
\Sigma_{\alpha,\beta}^{\prime}(x) & := 
     \sum_{n \leq x} n^{\alpha-\beta} \cdot S_1^{(-\alpha)}(n) \\ 
\Sigma_{\alpha,\beta}^{\prime\prime}(x) & := 
     \sum_{n \leq x} n^{\alpha-\beta} \cdot S_2^{(-\alpha)}(n) \\ 
\Sigma_{\alpha,\beta}^{\prime\prime\prime}(x) & := 
     \sum_{n \leq x} \tau_{-\alpha}(n) n^{\alpha-\beta}. 
\end{align*} 
The combined approximation we are after is given in terms of this notation by 
\[
\Sigma^{(\alpha, \beta)}(x) = 
     \sum_{n \leq x} n^{\alpha-\beta} \cdot H_n^{(\alpha+1)} + 
     \Sigma_{\alpha,\beta}^{\prime}(x) + \Sigma_{\alpha,\beta}^{\prime\prime}(x) + 
     \Sigma_{\alpha,\beta}^{\prime\prime\prime}(x). 
\]
\emph{(I) Leading Term Estimates:} 
For $\alpha > 0$ and $x \geq 1$, we have that the $(\alpha+1)$-order harmonic numbers satisfy
\[
H_x^{(\alpha+1)} = \zeta(\alpha+1) + O\left(x^{-(\alpha+1)}\right), 
\]
where $\zeta(s)$ is the \emph{Riemann zeta function}. 
Notice that the leading terms in the formula for $\sigma_{\alpha}(n)$ from 
Theorem \ref{theorem_main_SOD_result_V2_symmetric form} then correspond to 
\begin{align} 
\notag
\sum_{n \leq x} n^{\alpha-\beta} \cdot H_n^{(\alpha+1)} & = 
     \sum_{n \leq x} n^{\alpha-\beta}\left(\zeta(\alpha+1) + 
     O\left(\frac{1}{n^{\alpha+1}}\right)\right) \\ 
\notag 
     & = \sum_{n \leq x} \left(\zeta(\alpha+1) n^{\alpha-\beta} + 
     O\left(\frac{1}{n^{1+\beta}}\right)\right) \\ 
\notag 
     & = \zeta(\alpha+1)\left[ 
     \frac{x^{\alpha+1-\beta}}{\alpha+1-\beta} + \frac{x^{\alpha-\beta}}{2} + 
     \sum_{j=2}^{\alpha-\beta} \binom{\alpha+1-\beta}{j} B_j 
     \frac{x^{\alpha+1-\beta-j}}{\alpha+1-\beta}\right] \\ 
\label{eqn_proof_asympSigmaAlphaBeta_Terms_v0} 
     & \phantom{=\zeta(\alpha+1)\ } + 
     \begin{cases} 
     O(\log x), & \text{if } \beta = 0; \\ 
     O(1), & \text{if } \beta > 0, 
     \end{cases} 
\end{align} 
where $B_n$ are the \emph{Bernoulli numbers}. 
This establishes the leading dominant term in the asymptotic expansion which 
matches with the known classical result cited above. \\ 
\emph{(II) Second Terms Estimate:} 
Next, we can asymptotically expand the first
component sum in Theorem \ref{theorem_main_SOD_result_V2_symmetric form} as 
\begin{align*} 
S_1^{(-\alpha)}(n) & = 
     \sum_{p \leq n} \left[ 
     \sum_{k=1}^{\nu_p(n)} \frac{(p-1)}{p}\left(\frac{\zeta(\alpha+1)}{p^{\alpha k}} + 
     O\left(\frac{p^k}{n^{\alpha+1}}\right)\right) - 
     \frac{\zeta(\alpha+1)}{p^{\alpha \nu_p(n) + \alpha+1}} + 
     O\left(\frac{p^{\nu_p(n)}}{n^{\alpha+1}}\right)
     \right] \\ 
     & = 
     \sum_{p \leq n} \left[ 
     \frac{\zeta(\alpha+1)(p-1)}{p^{\alpha \nu_p(n) + 1} 
     \left(p^{\alpha}-1\right)} - 
     \frac{\zeta(\alpha+1) (p-1)}{p^{\alpha+1} \left(p^{\alpha}-1\right)}  
     \right] \Iverson{p | n} \\ 
     & \phantom{=\sum\ } - 
     \sum_{p \leq n} 
     \frac{\zeta(\alpha+1)}{p^{\alpha \nu_p(n)+\alpha+1}} + 
     O\left(\frac{1}{n^{\alpha-1} \cdot \log n}\right), 
\end{align*} 
where the last error term results by observing that $\nu_p(n) \leq \log_p(n)$. 
We can use Abel summation together with a divergent asymptotic expansion 
for the exponential integral function to determine that for real $r > 0$, 
the prime sums 
\begin{align*} 
\sum_{p \leq x} \frac{1}{p^{r+1}} & = C_r + \frac{r+1}{r \cdot x^r \log x} + 
     O\left(\frac{1}{x^{r} \log^2 x}\right) \\ 
\sum_{p \leq x} \frac{1}{p^{r+1} \log p} & = D_r + 
     \frac{(r+2)}{2 x^{r} \log x} - 
     \frac{(r+2) \log x + 1}{2 x^{r} \log^2 x} + 
     O\left(\frac{1}{x^{r} \log^2 x}\right), 
\end{align*} 
where the terms $C_r,D_r$ are absolute constants depending only on $r$. 
Now we will perform the sum over $n$ and multiply by a factor of $n^{\alpha}$ as in 
Theorem \ref{theorem_main_SOD_result_V2_symmetric form}, and then 
interchange the indices of summation to obtain that 
\begin{align*} 
\Sigma_{\alpha,\beta}^{\prime}(x) & := 
     \sum_{n \leq x} n^{\alpha-\beta} \cdot S_1^{(-\alpha)}(n) \\ 
     & \phantom{:} = 
     \sum_{p \leq x} \left[ 
     \sum_{n=p}^x n^{\alpha-\beta} \left(
     \frac{\zeta(\alpha+1)(p-1)}{p^{\alpha \nu_p(n) + 1} 
     \left(p^{\alpha}-1\right)} - 
     \frac{\zeta(\alpha+1) (p-1)}{p^{\alpha+1} \left(p^{\alpha}-1\right)} 
     \right) \Iverson{p | n} 
     \right] \\ 
     & \phantom{:=\sum\sum\ } - 
     \sum_{p \leq x} \sum_{n=p}^{x} 
     \frac{n^{\alpha-\beta} \zeta(\alpha+1)}{p^{\alpha \nu_p(n)+\alpha+1}}
     + O\left(\frac{1}{n^{\beta}}\right). 
\end{align*} 
For $\beta > 1$, the error term in the last equation corresponds to 
\[
\sum_{p \leq x} \sum_{n=p}^{x} O\left(\frac{1}{n^{\beta}}\right) = 
     \sum_{p \leq x} \left[\zeta(\beta) + O\left(\frac{1}{x^{\beta}}\right)\right] = 
     O\left(\frac{x}{\log x} + \frac{1}{x^{\beta-1} \log x}\right) = 
     O\left(\frac{x}{\log x}\right), 
\]
by applying the known asymptotic estimate for the \emph{prime counting function}, 
$\pi(x) = x / \log x + O(x / \log^2 x)$. 
At this point we must evaluate sums of the next form for fixed primes $p$: 
\begin{align*} 
T_{1,p}(x) & := \sum_{n=p}^x \frac{n^{\alpha-\beta} \Iverson{p | n}}{ 
     p^{\alpha \nu_p(n)}}.
\end{align*} 
We can use Abel summation and then form the approximations to the next 
summatory functions in the following way to approximate $T_{1,p}(x)$ for 
large $x$:
\begin{align*} 
A_{1,p}(t) & = 
     \sum_{i \leq t} \frac{\Iverson{p|i}}{p^{\alpha \nu_p(i)}} =
     \sum_{i \leq t/p} \frac{1}{p^{\alpha \nu_p(p \cdot i)}} \\ 
     & = 
     \sum_{k=1}^{\log_p(t)} \frac{1}{p^{\alpha k}} \#\{i \leq t/p: 
     \nu_p(i) = k\} \\ 
     & = 
     \sum_{k=0}^{\infty} 
     \frac{1}{p^{\alpha (k+1)}} \left(\#\{i \leq t/p: p^{k+1}|i\} - 
     \sum_{s=k+2}^{\infty} \#\{i \leq t/p: p^{s}|i\}\right) \\ 
     & = 
     \sum_{k=0}^{\infty} \frac{1}{p^{\alpha (k+1)}}\left[\frac{t}{p^{k+2}} - 
     \sum_{i=k+2}^{\infty} \frac{t}{p^{i+1}} 
     \right] \\ 
     & = 
     \frac{(p-2)t}{p(p-1)\left(p^{\alpha+1}-1\right)}. 
\end{align*} 
Then we have by Abel's summation formula that 
\begin{align*} 
T_{1,p}(x) & = x^{\alpha-\beta} \cdot A_{1,p}(x) - 
     \int_{0}^{x} A_{1,p}(t) D_t[t^{\alpha-\beta}] dt \\ 
     & = \frac{(p-2) x^{\alpha+1-\beta}}{(\alpha+1-\beta) p(p-1) 
     \left(p^{\alpha+1}-1\right)}. 
\end{align*} 
Similarly, we can estimate the asymptotic order of the sums 
\begin{align*} 
T_{2,p}(x) & := \sum_{n=p}^x \frac{n^{\alpha-\beta}}{ 
     p^{\alpha \nu_p(n) + \alpha + 1}} \\ 
     & = \frac{(p-2) x^{\alpha+1-\beta}}{(\alpha+1-\beta) (p-1) p^{\alpha+1} 
     \left(p^{\alpha+1}-1\right)}.
\end{align*} 
Thus it follows that 
\begin{align*} 
\sum_{p \leq x} \left[\frac{\zeta(\alpha+1) (p-1)}{p(p^{\alpha}-1)} T_{1,p}(x) - 
     \zeta(\alpha+1) T_{2,p}(x)\right] & \sim 
     \frac{C_1(\alpha) \zeta(\alpha+1) x^{\alpha+1-\beta}}{(\alpha+1-\beta)}. 
\end{align*} 
We can also estimate the summands 
\begin{align*} 
\sum_{p \leq x} \sum_{n=p}^x n^{\alpha-\beta} \cdot 
     \frac{\zeta(\alpha+1) (p-1)}{p^{\alpha+1} \left(p^{\alpha}-1\right)} 
     \Iverson{p | n} & = 
     \sum_{p \leq x} \sum_{n=1}^{x/p} (pn)^{\alpha-\beta} \cdot 
     \frac{\zeta(\alpha+1) (p-1)}{p^{\alpha+1} \left(p^{\alpha}-1\right)} \\ 
     & = \sum_{j=0}^{\alpha-\beta} \binom{\alpha+1-\beta}{j} 
     \frac{B_j x^{\alpha+1-\beta-j}}{(\alpha+1-\beta)} \cdot 
     \frac{\zeta(\alpha+1) (p-1)}{p^{\alpha+1-j} \left(p^{\alpha}-1\right)} \\ 
     & \sim \sum_{j=0}^{\alpha-\beta} \binom{\alpha+1-\beta}{j} 
     \frac{C_{2,j}(\alpha) B_j x^{\alpha+1-\beta-j}}{(\alpha+1-\beta)}. 
\end{align*} 
In summary, we obtain that 
\begin{align} 
\label{eqn_proof_asympSigmaAlphaBeta_Terms_v1} 
\Sigma_{\alpha,\beta}^{\prime}(x) & = 
     \frac{C_1(\alpha) \zeta(\alpha+1) x^{\alpha+1-\beta}}{(\alpha+1-\beta)} - 
     \sum_{j=0}^{\alpha-\beta} \binom{\alpha+1-\beta}{j} 
     \frac{C_{2,j}(\alpha) B_j x^{\alpha+1-\beta-j}}{(\alpha+1-\beta)} + 
     O\left(\frac{x}{\log x}\right). 
\end{align} 
\emph{(III) Third Component Terms Estimate:} 
Following from the estimates in the previous equations, we see that 
the second component sum from Theorem \ref{theorem_main_SOD_result_V2_symmetric form} 
is similar to the constructions of the sums we estimated last in (II), weighted 
by an additional term of $(-1)^n \times 2^{-(\alpha+1)}$ when performing the last 
(outer) sum on $n \leq x$. In particular, we have that 
\begin{align*} 
\Sigma_{\alpha,\beta}^{\prime\prime}(x) & = 
     \sum_{n \leq x} n^{\alpha-\beta} S_2^{(-\alpha)}(n) \\ 
     & = \sum_{p \leq x} \left[ 
     \sum_{n=p}^x \frac{(-1)^n \cdot n^{\alpha-\beta}}{2^{\alpha+1}} \left(
     \frac{\zeta(\alpha+1)(p-1)}{p^{\alpha \nu_p(n) + 1} 
     \left(p^{\alpha}-1\right)} - 
     \frac{\zeta(\alpha+1) (p-1)}{p^{\alpha+1} \left(p^{\alpha}-1\right)} 
     \right) \Iverson{p | n} 
     \right] \\ 
     & \phantom{:=\sum\sum\ } - 
     \sum_{p \leq x} \sum_{n=p}^{x} 
     \frac{(-1)^n \cdot n^{\alpha-\beta} \zeta(\alpha+1)}{2^{\alpha+1} 
     p^{\alpha \nu_p(n)+\alpha+1}} + 
     O\left(\frac{1}{n^{\beta}}\right). 
\end{align*} 
Hence, our estimates in this case boil down to summing the following formulas 
by Abel summation for large $x$: 
\[
U_{1,p}(x) := \sum_{n=p}^{x} \frac{(-1)^n \cdot n^{\alpha-\beta} \Iverson{p | n}}{ 
     p^{\alpha \nu_p(n)}}. 
\]
To bound the main and error terms in this sum for large enough $x$, we proceed 
as before to form the summatory functions 
\begin{align*} 
B_{1,2}(t) & = \sum_{i \leq t} \frac{(-1)^i \cdot \Iverson{2|i}}{ 
     2^{\alpha \nu_2(i)}} = \sum_{1 \leq i \leq t/2} 
     \frac{1}{2^{\alpha \nu_2(2i)}} \\ 
     & = \sum_{k=1}^{\log_2(t)} \frac{1}{2^{\alpha k}} \#\{i \leq t/2: 
     \nu_2(i) = k\} \\ 
     & = 
     \sum_{k=0}^{\infty} 
     \frac{1}{2^{\alpha (k+1)}} \left(\#\{i \leq t/2: 2^{k+1}|i\} - 
     \sum_{s=k+2}^{\infty} \#\{i \leq t/2: 2^{s}|i\}\right) \\ 
     & = \frac{(p-2) t}{(p-1) \left(p^{\alpha+1}-1\right)} 
     \Biggr\rvert_{p=2} \\ 
     & = 0, 
\end{align*} 
and for $p \geq 3$ an odd prime, we expand analogously as 
\begin{align*} 
B_{1,p}(t) & = \sum_{i \leq t} \frac{(-1)^i \cdot \Iverson{p|i}}{ 
     p^{\alpha \nu_p(i)}} = - 
     \sum_{i \leq t/p} \frac{1}{p^{\alpha \nu_p(p \cdot i)}} \\ 
     & = 
     -\sum_{k=1}^{\log_p(t)} \frac{1}{p^{\alpha k}} \#\{i \leq t/p: 
     \nu_p(i) = k\} \\ 
     & = 
     -\sum_{k=0}^{\infty} 
     \frac{1}{p^{\alpha (k+1)}} \left(\#\{i \leq t/p: p^{k+1}|i\} - 
     \sum_{s=k+2}^{\infty} \#\{i \leq t/p: p^{s}|i\}\right) \\ 
     & = 
     -\sum_{k=0}^{\infty} \frac{1}{p^{\alpha (k+1)}}\left[\frac{t}{p^{k+2}} - 
     \sum_{i=k+2}^{\infty} \frac{t}{p^{i+1}} 
     \right] \\ 
     & = -\frac{(p-2) t}{p(p-1) \left(p^{\alpha+1}-1\right)}. 
\end{align*} 
It follows from Abel's summation formula that 
\begin{align*} 
U_{1,p}(x) & = -\frac{(p-2) x^{\alpha+1-\beta}}{(\alpha+1-\beta) p(p-1) 
     \left(p^{\alpha+1}-1\right)} \Iverson{p \geq 3}. 
\end{align*} 
Then we can pick out the similar terms from these sums as we did in the 
previous derivations from step (II) and estimate the 
resulting formula in the form of 
\begin{align*} 
U_{2,p}(x) & = \sum_{n=p}^x \frac{(-1)^n \cdot n^{\alpha-\beta}}{2^{\alpha+1}  
     p^{\alpha \nu_p(n) + \alpha + 1}} \\ 
     & = -\frac{(p-2) x^{\alpha+1-\beta}}{(\alpha+1-\beta) (p-1) p^{\alpha+1} 
     \left(p^{\alpha+1}-1\right)} \Iverson{p \geq 3}.
\end{align*} 
When we sum the corresponding terms in these two functions over all odd primes 
$p \leq x$, we obtain that 
\begin{align*} 
\sum_{3 \leq p \leq x} \left[\frac{\zeta(\alpha+1) (p-1)}{p(p^{\alpha}-1)} U_{1,p}(x) - 
     \zeta(\alpha+1) U_{2,p}(x)\right] & \sim 
     -\frac{C_3(\alpha) \zeta(\alpha+1) x^{\alpha+1-\beta}}{(\alpha+1-\beta)}. 
\end{align*} 
To complete the estimate for the formula in this section (III), it remains to 
perform the summation estimates \citep[\cf \S 24.4(iii)]{NISTHB} 
\begin{align*} 
U_{3,p}(x) & := 
     \sum_{n=p}^x \frac{(-1)^n \cdot n^{\alpha-\beta}}{2^{\alpha+1}} \left(
     \frac{\zeta(\alpha+1) (p-1)}{p^{\alpha+1} \left(p^{\alpha}-1\right)} 
     \right) \Iverson{p | n} \\ 
     & \phantom{:} = 
     \sum_{n=1}^{x/p} \frac{(-1)^n \cdot (pn)^{\alpha-\beta}}{2^{\alpha+1}} \left(
     \frac{\zeta(\alpha+1) (p-1)}{p^{\alpha+1} \left(p^{\alpha}-1\right)} 
     \right) \\ 
     & \phantom{:} = \left[\sum_{k=0}^{\alpha-\beta} \binom{\alpha-\beta}{k} 
     \frac{(-1)^{\alpha-\beta} E_k}{2^{k+\alpha+2}} \left(\frac{x}{p} - 
     \frac{1}{2}\right)^{\alpha-\beta-k} + \frac{(-1)^{\alpha-\beta}}{2^{2\alpha+2-\beta}} 
     \sum_{k=0}^{\alpha-\beta} \binom{\alpha-\beta}{k} (-1)^k E_k\right] \times \\ 
     & \phantom{:= \Biggl[\sum\ } \times \left( 
     \frac{\zeta(\alpha+1) (p-1)}{p^{\beta+1} \left(p^{\alpha}-1\right)} 
     \right) 
\end{align*} 
Thus in total, for estimate (III) we have that 
\begin{align} 
\notag 
\Sigma_{\alpha,\beta}^{\prime\prime}(x) & \sim 
     -\frac{C_3(\alpha) \zeta(\alpha+1) x^{\alpha+1-\beta}}{(\alpha+1-\beta)} + 
     \sum_{r=0}^{\alpha-\beta} C_{4,r}(\alpha, \beta) x^r \\ 
\label{eqn_proof_asympSigmaAlphaBeta_Terms_v2} 
     & \phantom{\sim\ } + 
     \frac{(-1)^{\alpha-\beta}}{2^{2\alpha+2-\beta}} 
     \sum_{k=0}^{\alpha-\beta} \binom{\alpha-\beta}{k} (-1)^k E_k \times 
     \sum_{p \geq 2} 
     \frac{\zeta(\alpha+1) (p-1)}{p^{\beta+1} \left(p^{\alpha}-1\right)} 
     + o(1) 
\end{align} 
\emph{(IV) Last Remaining Estimates (Tau Divisor Sum Terms):} 
Finally, we have only one component in the sums from 
Theorem \ref{theorem_main_SOD_result_V2_symmetric form} left to estimate. 
Namely, we must bound the sums 
\[
\Sigma_{\alpha,\beta}^{\prime\prime\prime}(x) := 
     \sum_{n \leq x} \tau_{-\alpha}(n) n^{\alpha-\beta} = 
     \sum_{n \leq x} \sum_{d=1}^n \frac{n^{\alpha-\beta}}{d^{\alpha+1}} 
     H_{\floor{\frac{n}{d}}}^{(\alpha+1)} c_d(n) \chi_{\PP}(d). 
\]
Now by expanding the previous sums according to the divisor sum identities 
in Lemma \ref{lemma_DivSumIdents}, we obtain that 
\begin{align} 
\notag 
\Sigma_{\alpha,\beta}^{\prime\prime\prime}(x) & = 
     \sum_{n \leq x} \sum_{d=1}^n \left(\sum_{r|(d,n)} r \mu(d/r)\right) 
     H_{\floor{\frac{n}{d}}}^{(\alpha+1)} \frac{\chi_{\PP}(d)}{d^{\alpha+1}} 
     n^{\alpha-\beta} \\ 
\notag 
     & = 
     \sum_{n \leq x} \sum_{d=1}^n \left(\sum_{r|(d,n)} r \mu(d/r)\right) 
     H_{\floor{\frac{n}{d}}}^{(\alpha+1)} \frac{n^{\alpha-\beta}}{d^{\alpha+1}} \\ 
\notag 
     & \phantom{=\ } - 
     \sum_{n \leq x} \sum_{p \leq n} \sum_{k=1}^{\log_p(n)} C_{p^k}(n) \left( 
     \zeta(\alpha+1) + O\left(\frac{p^{(\alpha+1)k}}{n^{\alpha+1}}\right)\right) 
     \frac{n^{\alpha-\beta}}{p^{(\alpha+1)k}} \\ 
\notag & = 
     \sum_{n \leq x} \sum_{r|n} \sum_{1 \leq d \leq r} 
     \frac{r^{\alpha}}{n^{\beta}} \frac{\mu(d)}{d^{\alpha+1}} 
     H_{\floor{\frac{r}{d}}}^{(\alpha+1)} \\ 
\label{eqn_proof_SigmaPPP_last_eq} 
     & \phantom{=\ } - 
     \sum_{p \leq x} \sum_{n=1}^{x / p} 
     \sum_{k=1}^{\nu_p(n)-1} p^k \times 
     \left(\zeta(\alpha+1) 
     \frac{(p^kn)^{\alpha-\beta}}{p^{(\alpha+1)k}} + O\left(\frac{1}{n^{\beta+1}}
     \right)\right) 
\end{align} 
In the transition from the second to last to the previous equation, we have used a 
known fact about the \emph{Ramanujan sums}, $c_{p^k}(n)$, at prime powers. 
Namely, $c_{p^k}(n) = 0$ whenever $p^{k-1} \nmid n$, and where the function is 
given by $c_{p^k}(n) = -p^{k-1}$ if $p^{k-1} | n$, but $p^k \nmid n$ or 
$c_{p^k}(n) = \phi(p^k) = p^k-p^{k-1}$ if $p^k | n$. 
For the first sum in \eqref{eqn_proof_SigmaPPP_last_eq}, we can extend the divisor sum 
over $r$ to all $1 \leq r \leq n$, take the absolute values of all M\"obius function 
terms over $d$, and observe that the resulting formula we obtain by applying 
summation by parts corresponds to the identity that 
$$H_{\floor{\frac{r+1}{d}}}^{(m)} - H_{\floor{\frac{r}{d}}}^{(m)} = 
       \left(\frac{r}{d}\right)^{-m} \Iverson{d|r}.$$ 
Then we see that this first sum is bounded by 
\begin{align*} 
\sum_{n \leq x} \sum_{r|n} \sum_{1 \leq d \leq r} 
     \frac{r^{\alpha}}{n^{\beta}} \frac{\mu(d)}{d^{\alpha+1}} 
     H_{\floor{\frac{r}{d}}}^{(\alpha+1)} & = O\left( 
     \sum_{n \leq x} \sum_{r=1}^n \left(r^{\alpha+1} + O(r^{\alpha})\right) \sum_{d|r} 
     \frac{|\mu(d)|}{d^{\alpha+1}} \left(\frac{d}{r}\right)^{\alpha+1}  
     \right) \\ 
     & = O\left(\sum_{n \leq x} \frac{1}{n^{\beta}} \sum_{r=1}^n \sum_{d|r} 
     |\mu(d)|\right) \\ 
     & = O\left(\sum_{n \leq x} \frac{1}{n^{\beta}} \sum_{r=1}^n 2^{\omega(r)}\right) \\ 
     & = O\left(\sum_{n \leq x} \frac{1}{n^{\beta}} \sum_{k \geq 1} 2^{k} \cdot 
     \#\{1 \leq r \leq n: \omega(r) = k\}\right). 
\end{align*} 
Now we can draw upon the result of Erd\"os in \cite{ERDOS-PRIMEKX} to approximately 
sum the right-hand-side of the last equation as follows for integers $\beta > 1$: 
\begin{align*} 
O\left(\sum_{n \leq x} 2(1+o(1)) n^{1-\beta} \log n\right) & = 
     \begin{cases} 
     O\left(\log^2 x\right), & \text{ if $\beta = 2$; } \\ 
     O\left(\frac{2(\beta-2)\log x+1}{(\beta-2)^2 x^{\beta-2}}\right), & 
     \text{ if $\beta \geq 3$. } 
     \end{cases} 
\end{align*} 
Finally, to complete the estimate of (IV), we still need to bound the 
order of the second sum in \eqref{eqn_proof_SigmaPPP_last_eq} as 
\begin{align*} 
V_{2,p}(x) & := \sum_{n=1}^{x/p} 
     \sum_{k=1}^{\nu_p(n)-1} p^k \times 
     \left(\zeta(\alpha+1) 
     \frac{(p^kn)^{\alpha-\beta} \Iverson{p|n}}{p^{(\alpha+1)k}} + 
     O\left(\frac{1}{n^{\beta+1}}
     \right)\right) \\ 
     & \phantom{:} = 
     \sum_{n=1}^{x/p} \frac{\zeta(\alpha+1) n^{\alpha-\beta}}{p^{\beta \nu_p(n)}} - 
     \frac{1}{p^{\beta}} \sum_{j=0}^{\alpha-\beta} \binom{\alpha+1-\beta}{j} 
     \frac{\zeta(\alpha+1) B_j}{(\alpha+1-\beta)} \left(\frac{x}{p}\right)^{ 
     \alpha+1-\beta-j} \\ 
     & \phantom{:=\ } + 
     O\left(\frac{1}{(p-1) n^{\beta}} - \frac{p}{(p-1) n^{\beta+1}}\right), 
\end{align*} 
where we have again used the upper bound $\nu_p(n) \leq \log_p(n)$ to bound the 
error term in the previous equation. To evaluate the first component sum in the 
previous equation, we can use Abel summation much like we have in the prior estimates 
in this proof. Namely, we form the summatory functions 
\begin{align*} 
A_{3,p}(t) & = \sum_{i \leq t} \frac{\Iverson{p|i}}{p^{\beta \nu_p(i)}} = 
     \sum_{i \leq t/p} \frac{1}{p^{\beta \nu_p(pi)}} \\ 
     & = \sum_{k \geq 1} \frac{1}{p^{\beta k}} \#\{i \leq t/p: \nu_p(i) = k\} \\ 
     & = \sum_{k \geq 1} \frac{1}{p^{\beta k}} \left[ 
     \frac{t}{p^{k+1}} - \sum_{i \geq k+1} \frac{t}{p^{i+1}}\right] \\ 
     & = \frac{(p-2) t}{p(p-1) (p^{\beta+1}-1)}. 
\end{align*} 
Then the complete first sum above is given by 
\begin{align*} 
\sum_{p \leq x} \sum_{n=1}^{x/p} 
     \frac{\zeta(\alpha+1) n^{\alpha-\beta}}{p^{\beta \nu_p(n)}} & = 
     \sum_{p \leq x} \frac{\zeta(\alpha+1) 
     (p-2) x^{\alpha+1-\beta}}{(\alpha+1-\beta) p(p-1) (p^{\beta+1}-1)}. 
\end{align*} 
When we assemble the second two sums over all primes $p \leq x$, and add it to the first, 
we obtain 
\begin{align*} 
\sum_{p \leq x} V_{2,p}(x) & \sim 
     \frac{C_6(\beta) \zeta(\alpha+1) x^{\alpha+1-\beta}}{(\alpha+1-\beta)} + 
     \sum_{j=0}^{\alpha-\beta} \binom{\alpha+1-\beta}{j} 
     \frac{C_{7,j}(\alpha) \zeta(\alpha+1) B_j x^{\alpha+1-\beta-j}}{(\alpha+1-\beta)} \\ 
     & \phantom{=\ } + 
     O\left(\frac{x}{\log x}\right). 
\end{align*} 
We can make the final step of combining the asymptotic estimates derived in 
subsections (I) -- (IV) of this proof above to conclude that the result in the theorem
is true.  
\end{proof}


\begin{thebibliography}{1}

\bibitem{CARELLA-EXPLFORMULA-DIVFN} 
N. A. Carella, {An explicit formula for the divisor function}, 2014, 
  \url{https://arxiv.org/abs/1405.4784}. 

\bibitem{ONCIRCLE-AND-DIVISOR} 
B. Cloitre, {On the circle and divisor problems}, 2012, 
  \url{https://oeis.org/A013936/a013936.pdf}. 

\bibitem{DAVENPORT-MULTNUMT} 
H. Davenport, {\it Multiplicative Number Theory}, Springer, 2000. 

\bibitem{ERDOS-PRIMEKX} 
P. Erd\"os, {On the integers having exactly $k$ prime factors}, 
  {\it Annals of Math.} \textbf{1}, pp. 53--66 (1948). 

\bibitem{HARDYWRIGHTNUMT} 
G. H. Hardy and E. M. Wright, {\it An introduction to the theory of numbers}, 
  Oxford University Press, 2008. 

\bibitem{HARDY-RAMANUJAN} 
G. H. Hardy, {\it Ramanujan: Twelve lectures on subjects suggested by his life and 
  work}, AMS Chelsea Publishing, 1999. 

\bibitem{WALFISZ63}
H. Hasse, {A. Walfisz, Weylsche Exponentialsummen in der neueren Zahlentheorie (Mathematische Forschungsberichte, Band XV). 231 S. Berlin 1963. Deutscher Verlag der Wissenschaften. Preis brosch. DM 36,-}, \emph{ZAMM}, \textbf{44} (12), p. 607 (1964).

\bibitem{HUXLEY2003} 
Huxley, M. N. Exponential Sums and Lattice Points III. 
  {\it Proc. London Math. Soc.} {\bf 87} 5910-609 (2003).

\bibitem{CYCLOTOMIC-POLYS} 
K. Ireland and M. Rosen, {\it A classical introduction to modern number theory}, 
  Springer, 1990. 

\bibitem{LUSTIG-ALGINDEP-SUMDIVFNS} 
D. Lustig, {The algebraic independence of the sum-of-divisors functions}, 
  {\it Journal of Number Theory}, {\bf 130}, pp. 2628--2633 (2010). 

\bibitem{MERCA-SCHMIDT-AMM} 
M. Merca and M. D. Schmidt, {\em A partition identity related to Stanley's theorem}, 
  \emph{Amer. Math. Monthly}, to appear (2018). 

\bibitem{ADDITIVENUMT} 
M. B. Nathanson, {\it Additive number theory: the classical bases}, Springer, 1996. 

\bibitem{NISTHB}
F.~W.~J. Olver, D.~W. Lozier, R.~F. Boisvert, and C.~W. Clark, {\it
  {NIST} Handbook of Mathematical Functions}, Cambridge University Press, 2010.

\bibitem{PRIMEREC} 
P. Ribenboim, {\it The new book of prime number records}, Springer, 1996. 

\bibitem{SCHMIDT-LAMBERTSER-MATRIXEQNS} 
M. D. Schmidt, {New recurrence relations and matrix equations for 
  arithmetic functions generated by Lambert series},
  {\it Acta. Arith.} (2017). 

\bibitem{SILLS-RADEMACHER} 
A. V. Sills, {A Rademacher type formula for partitions and over partitions}, 
  {\it International Journal of Mathematics and Mathematical Sciences}, 
  {\bf 2010}, Article ID 630458, 21 pages (2010). 
  
\bibitem{MERTENS-PROOF-OF-MERTENS-THM}
M. B. Villarino, {Mertens' proof of Mertens' Theorem}, 
   {\it arXiv:math/0504289} (2005). 

\end{thebibliography}
\end{document}